\newtheorem{thm}{Theorem}[section]
\newtheorem{cor}[thm]{Corollary}
\newtheorem{lem}[thm]{Lemma}
\newtheorem{proposition}[thm]{Proposition}
\newtheorem{example}[thm]{Example}
\theoremstyle{definition}
\newtheorem{definition}[thm]{Definition}
\theoremstyle{remark}
\newtheorem{remark}[thm]{Remark}
\numberwithin{equation}{section}
\newcommand{\K}{\mathbb K}
\newcommand{\g}{ \mathfrak g}
\begin{document}
\begin{frontmatter}
\title
{Quadratic color Hom-Lie algebras}

\author[Sfax]{F. AMMAR}
\ead{Faouzi.Ammar@fss.rnu.tn}

\author[Mulhouse]{I. AYADI}
\ead{imen.ayadi@uha.fr }

\author[Sfax]{S. MABROUK}
\ead{Mabrouksami00@yahoo.fr}

\author[Mulhouse]{A. MAKHLOUF}%
\ead{Abdenacer.Makhlouf@uha.fr}

\address[Sfax]{
Universit\'{e} de Sfax,  Facult\'{e} des Sciences, Sfax Tunisia}%

\address[Mulhouse]{
Universit\'{e} de Haute Alsace,  Laboratoire de Math\'{e}matiques, Informatique et Applications, 
Mulhouse, France}%

\journal{arXiv}

\date{}
%
\begin{abstract}
The purpose of this paper is to  study quadratic  color Hom-Lie algebras.  We present some constructions of quadratic color Hom-Lie algebras which we use to provide several examples. We describe  $T^*$-extensions and  central extensions of color Hom-Lie algebras and establish some cohomological characterizations.\\ 
\end{abstract}
\end{frontmatter}

\section*{Introduction}

The aim of this  paper is to introduce and study quadratic color Hom-Lie algebras which are graded Hom-Lie algebras with $\varepsilon$-symmetric, invariant and nondegenerate bilinear forms. Color Lie algebras, originally introduced in \cite{rittenberg1978generalized} and \cite{rittenberg1978sequences}, can be seen as a direct generalization of Lie algebras bordering Lie superalgebras. The grading   is determined by an abelian group $\Gamma$ and the definition involves a bicharacter function. Hom-Lie algebras are a generalization of Lie algebras, where the classical Jacobi identity is twisted by a linear map. Quadratic  Hom-Lie algebras were studied in  \cite{benayadi2010hom}.\\

 $\Gamma$-graded Lie algebras with quadratic-algebraic structures, that is  $\Gamma$-graded Lie algebras provided with homogeneous, symmetric, invariant and nondegenerate bilinear forms,  have been extensively studied specially in the case where $\Gamma = {\mathbb Z}_2$ (see for example \cite{albuquerque2010odd,ayadi2010lie,bajo2007generalized,benamor1999double,benayadi2000quadratic,medina1985algebres,scheunert1979theory}). These algebras are called homogeneous (even or odd) quadratic Lie superalgebras. One of the fundamental results connected to  homogeneous quadratic Lie superalgebras is to give its inductive descriptions. The main tool used to obtain these inductive descriptions is to develop some concept of double extensions. This concept was introduced by Medina and Revoy (see \cite{medina1985algebres}) to give a classification of quadratic Lie algebras.  The concept of $T^*$-extension was introduced by  Bordemann \cite{Bordemann}. Recently  a generalization to the case of quadratic (even quadratic) color Lie algebras was obtained in \cite{qingcheng2008derivations} and \cite{wang2010non}. They mainly generalized the double extension notion and its inductive descriptions.\\
 
  Hom-algebraic structures appeared first as a generalization of Lie algebras in \cite{aizawa1991q,chaichian1990quantum,chaichian1990q} were the authors studied $q$-deformations of Witt and Virasoro algebras. A general study and construction of Hom-Lie algebras were considered  in \cite{hartwig2006deformations}, \cite{larsson2005quasi}. Since then, other interesting Hom-type algebraic structures of many classical structures were studied as Hom-associative algebras, Hom-Lie admissible algebras and more general $G$-Hom-associative algebras  \cite{makhlouf2007notes},  $n$-ary Hom-Nambu-Lie algebras  \cite{ammar2011quadratic}, Hom-Lie admissible Hom-coalgebras and Hom-Hopf algebras \cite{makhlouf2009hom}, Hom-alternative algebras, Hom-Malcev algebras  and Hom-Jordan algebras  \cite{elhamdadi2010deformations,makhlouf2009hom,Yau2012}.  Hom-algebraic structures  were extended to the case of $\Gamma$-graded Lie algebras by studying Hom-Lie superalgebras and Hom-Lie admissible superalgebras in \cite{ammar2010hom}. Recently, the study of Hom-Lie algebras provided with quadratic-algebraic structures was initiated by S. Benayadi and A. Makhlouf in \cite{benayadi2010hom} and our purpose in this paper is  to generalize this study to the case of color Hom-Lie algebras.\\

The paper is organized as follows: in Section 1 we give definition of quadratic color Hom-Lie algebras and we generalize to the case of quadratic color Hom-Lie algebras the result of \cite{yuan2010hom}. More precisely, we prove that the category of color Hom-Lie algebras (resp. the subcategory of quadratic color Hom-Lie algebras) is closed under self weak morphisms (resp. self symmetric automorphisms). In Section 2, we describe in one hand some constructions of quadratic color Hom-Lie algebras and in the other hand we  use them to provide examples. In Sections 3,  we  give some elements of representation theory of color Hom-Lie algebras and describe a semi-direct product.  Section 4 includes the main results dealing extensions and their relationships to cohomology. We introduce some elements of a cohomology of color Hom-Lie algebras controlling central extensions.  Moreover, we  establish the $T^*$-extension  for color Hom-Lie algebras. Finally, in Section 5 we construct  color Hom-Leibniz algebras arising from Faulkner construction introduced in \cite{figueroa2009deformations}.

\section{Quadratic color Hom-Lie algebras}

Throughout this paper  $\K$ denotes a commutative field of characteristic zero and  $\Gamma$ stands for an abelian group. A vector space $V$ is said to be a $\Gamma$-graded if we are given a family $(V_{\gamma})_{\gamma\in \Gamma}$ of vector subspace of $V$ such that $$V=\bigoplus_{\gamma\in \Gamma}V_{\gamma}.$$
An element $x\in V_{\gamma}$ is said to be homogeneous of degree $\gamma$. For simplicity  the degree of an element $x$ is denoted by $x$. If the base field is considered as a graded vector space, it is understood that the graduation of $\K$ is given by 
$${\K}_0 = \K  \ \ \ \mbox{and} \ \ \ {\K}_{\gamma}= \left\{0\right\}, \ \ \mbox{if}\ \ \gamma\in \Gamma\setminus \left\{0\right\}.$$ 
Let $V$ and $W$ be two $\Gamma$-graded vectors spaces. A linear map $f: V \longrightarrow W$ is said to be homogeneous of degree $\xi\in \Gamma$  if $f(x)$ is homogeneous of degree $\gamma + \xi$ whenever the element $x\in V_{\gamma}$. The set of all such maps is denoted  by ${(Hom(V,W))}_{\xi}$. It is a subspace of $Hom(V,W)$, the vector space of all linear maps from $V$ into $W$. \\

We mean by algebra (resp. $\Gamma$-graded algebra) $(A,.)$ a vector space (resp. $\Gamma$-graded vector space) with multiplication which  we denote by the concatenation and  such that $A_{\gamma}A_{\gamma'}\subseteq A_{\gamma+\gamma'}$, for all $\gamma, \gamma'\in \Gamma$. In the graded case, a map $f: A \longrightarrow B$, where $A$ and $B$ are $\Gamma$-graded algebras, is called a homomorphism of $\Gamma$-graded algebras if it is a homomorphism of algebras which is homogeneous of degree zero.\\

We mean by Hom-algebra (resp. $\Gamma$-graded Hom-algebra ) a triple  $(A,., \alpha)$ consisting of an  algebra (resp. $\Gamma$-graded algebra) with a  twist map $\alpha:A\rightarrow A$ (an even linear map).\\

For more detail about graded algebraic structures, we refer to \cite{scheunert1979generalized} \cite{scheunert1983graded}. In the following, we study a particular case of $\Gamma$-graded algebras which are quadratic color Hom-Lie  algebras.\\

\begin{definition}
Let $\Gamma$ be an abelian group. A map $\varepsilon:\Gamma\times\Gamma\rightarrow\K\setminus \left\{0\right\}$ is called a \textit{bicharacter} on ${\Gamma}$ if the following identities are satisfied
\begin{align}
\label{condi1-bicharacter}&\varepsilon(a,b)\varepsilon(b,a)=1,\\
\label{condi2-bicharacter}&\varepsilon(a,b+c)=\varepsilon(a,b)\varepsilon(a,c),\\
\label{condi3-bicharacter}&\varepsilon(a+b,c)=\varepsilon(a,c)\varepsilon(b,c),\ \ \forall a,b,c\in {\Gamma}.
\end{align}
\end{definition}

The definition above implies, in particular, the following relations
\begin{align*}
\varepsilon(a,0)=\varepsilon(0,a)=1,\ \varepsilon(a,a)=\pm1, \  \textrm{for\ all}\  a\in\Gamma.
\end{align*}
If $x$ and $x'$ are two homogeneous elements of degree $\gamma$ and $\gamma'$ respectively and $\varepsilon$ is a bicharacter, then we shorten the notation by writing $\varepsilon(x,x')$ instead of $\varepsilon(\gamma,\gamma')$. 

 Unless stated, in the sequel all the graded spaces are over the same abelian group $\Gamma$ and the bicharacter will be the same for all the structures. \\


%

\begin{definition}
A \textit{color Hom-Lie } algebra is a tuple $(\mathfrak{g},[\cdot,\cdot],\alpha,\varepsilon)$ consisting of a $\Gamma$-graded vector space $\mathfrak{g}$, a bicharacter  $\varepsilon$, an even bilinear map
$[\cdot,\cdot]:\mathfrak{g}\times\mathfrak{g}\rightarrow\mathfrak{g}$ $($i.e. $[\mathfrak{g}_a,\mathfrak{g}_b]\subset \mathfrak{g}_{a+b})$ and an even homomorphism $\alpha:\mathfrak{g}\rightarrow\mathfrak{g}$ such that for homogeneous elements $x,y,z\in\g$ we have 
\begin{align}
\label{sksymmetric}[x,y]=-\varepsilon(x,y)[y,x]&\ \ (\varepsilon-\textrm{skew\ symmetry}),\\
\label{JacobiId1}\circlearrowleft_{x,y,z}\varepsilon(z,x)[\alpha(x),[y,z]]=0&\ \ (\varepsilon-\textrm{Hom-Jacobi\ identity}),
\end{align}
where $\circlearrowleft_{x,y,z}$ denotes summation over the cyclic permutation on $x,y,z$. In particular, if $\alpha$ is a morphism of Lie algebras $($i.e. $\alpha\circ[\cdot,\cdot]=[\cdot,\cdot]\circ\alpha^{\otimes2})$, then we call   $(\mathfrak{g},[\cdot,\cdot],\varepsilon,\alpha)$ a multiplicative color Hom-Lie algebra.\\
\end{definition}

A $\Gamma$-graded subspace $I$ of $\g$ is said to be an ideal (resp. subalgebra) if $[I,\g]\subset I$ (resp. $[I,I] \subset I$).\\

We prove easily that the $\varepsilon-\textrm{Hom-Jacobi\ identity}$ is  equivalent to the following condition
\begin{align}\label{JacobiId2}
&[\alpha(x),[y,z]]-\varepsilon(x,y)[\alpha(y),[x,z]]=[[x,y],\alpha(z)].
\end{align}
We recover color Lie algebra when we have $\alpha=id_{\g}$. Color Lie algebra is a generalization of Lie algebra and Lie superalgebra  (if $\Gamma= \left\{0\right\}$, we have  ${\mathfrak g}= {\mathfrak g}_{0}$ is a Lie algebra and if $\Gamma= {\mathbb Z}_2=\left\{\bar{0},\bar{1}\right\}$ and $\varepsilon(\bar{1},\bar{1})= -1$, then ${\mathfrak g}$ is  a Lie superalgebra).\\

\begin{definition}
Let $(\mathfrak{g},[\cdot,\cdot],\alpha)$ and $(\mathfrak{g}',[\cdot,\cdot]',\alpha')$ be two color Hom-Lie algebras. An even linear map $f:\g\rightarrow\g'$ is called: 
\begin{enumerate}
\item[(i)] a \emph{weak morphism} of color Hom-Lie algebras if it satisfies $f([x,y])=[f(x),f(y)]'$, $\forall x,y\in \g$.
\item[(ii)] a \emph{morphism} of color Hom-Lie algebras  $f$ is a weak morphism and $f\circ\alpha=\alpha'\circ f$.
\item[(iii)] an \emph{automorphism} of color Hom-Lie algebras if $f$ is a bijective morphism of color Hom-Lie algebras.\\
\end{enumerate}
\end{definition}

In \cite{yuan2010hom}, the author applied the twisting principle introduced by Yau  and proved that starting from a color Lie algebra $(\mathfrak{g},[\cdot,\cdot], \varepsilon)$ and a weak morphism $\beta$ of $\g$ we obtain a color Hom-Lie algebra $(\mathfrak{g},[\cdot,\cdot]_\beta,\beta, \varepsilon)$, where $[\cdot,\cdot]_\beta=\beta\circ[\cdot,\cdot]$. In the following, we generalize the result obtained in \cite{yuan2010hom} in the following sense.\\


We consider the category of color Hom-Lie algebras consisting in color Hom-Lie algebras and morphisms of color Hom-Lie algebras. We give a color Hom-Lie version  of \cite[Theorem  2.5 ]{yuan2010hom}.\\

\begin{thm}\label{ConsWeakMorph}
The category of color Hom-Lie algebras is closed under self weak morphisms.\\
\end{thm}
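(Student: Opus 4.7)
The plan is to generalize Yau's twisting principle directly. Given a color Hom-Lie algebra $(\mathfrak{g},[\cdot,\cdot],\alpha,\varepsilon)$ and an even linear self weak morphism $\beta:\mathfrak{g}\to\mathfrak{g}$ (so $\beta([x,y])=[\beta(x),\beta(y)]$ for all homogeneous $x,y$), I would construct a twisted structure
\[
[x,y]_\beta := \beta([x,y]), \qquad \tilde\alpha := \beta\circ\alpha,
\]
and prove that $(\mathfrak{g},[\cdot,\cdot]_\beta,\tilde\alpha,\varepsilon)$ is again a color Hom-Lie algebra. That $[\cdot,\cdot]_\beta$ is an even bilinear map follows from $\beta$ being even, and $\tilde\alpha$ is an even endomorphism by composition.

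The $\varepsilon$-skew-symmetry is essentially immediate: for homogeneous $x,y$,
\[
[x,y]_\beta = \beta([x,y]) = -\varepsilon(x,y)\,\beta([y,x]) = -\varepsilon(x,y)\,[y,x]_\beta,
\]
using \eqref{sksymmetric} for the original bracket together with $\varepsilon$-bilinearity.

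The real content is the $\varepsilon$-Hom-Jacobi identity for the twisted structure. I would compute, for homogeneous $x,y,z$,
\[
[\tilde\alpha(x),[y,z]_\beta]_\beta = \beta\bigl([\beta(\alpha(x)),\beta([y,z])]\bigr) = \beta^{2}\bigl([\alpha(x),[y,z]]\bigr),
\]
where the first equality unfolds the definitions of $[\cdot,\cdot]_\beta$ and $\tilde\alpha$, and the second applies the weak-morphism property of $\beta$ (i.e.\ $[\beta(u),\beta(v)]=\beta([u,v])$) followed by absorbing the outer $\beta$. Summing over the cyclic permutation gives
\[
\circlearrowleft_{x,y,z}\varepsilon(z,x)\,[\tilde\alpha(x),[y,z]_\beta]_\beta
= \beta^{2}\!\left(\circlearrowleft_{x,y,z}\varepsilon(z,x)\,[\alpha(x),[y,z]]\right) = \beta^{2}(0)=0,
\]
by \eqref{JacobiId1} applied to the original color Hom-Lie algebra. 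This verifies \eqref{JacobiId1} for the twisted structure.

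The main conceptual obstacle is really just bookkeeping: one has to choose the right twist (here $\tilde\alpha=\beta\circ\alpha$) so that applying the weak-morphism property of $\beta$ twice collapses the inner bracket expressions to $\beta^{2}$ applied to the original Hom-Jacobiator. Once this choice is made, no subtle sign or degree computation is required because each application of $\beta$ respects the grading and the bicharacter $\varepsilon$ is unchanged. The same argument specializes to \cite[Theorem 2.5]{yuan2010hom} when $\alpha=\mathrm{id}_\mathfrak{g}$, recovering the known twisting result for color Lie algebras.
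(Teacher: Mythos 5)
Your proposal is correct and follows essentially the same route as the paper: twist the bracket by $[\cdot,\cdot]_\beta=\beta\circ[\cdot,\cdot]$ and the structure map by $\beta\circ\alpha$, use the weak-morphism property twice to collapse $[\beta\circ\alpha(x),\beta([y,z])]$ to $\beta([\alpha(x),[y,z]])$, and pull $\beta^{2}$ out of the cyclic sum to invoke the original $\varepsilon$-Hom-Jacobi identity. The only difference is that you spell out the $\varepsilon$-skew-symmetry check, which the paper merely asserts.
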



\begin{proof}
Let $(\mathfrak{g},[\cdot,\cdot],\alpha,\varepsilon)$ be a color Hom-Lie algebra and  $\beta:\mathfrak{g}\rightarrow\mathfrak{g}$ be a color Hom-Lie algebras weak morphism. Consider a  bracket  $[\cdot,\cdot]_\beta$ on $\g$ defined by  $[\cdot,\cdot]_\beta=\beta\circ[\cdot,\cdot]$. Then, for any homogeneous elements $x,y,z\in \mathfrak{g}$ and using the $\varepsilon$-Hom-Jacobi identity \eqref{JacobiId1}, we have
\begin{align*}\circlearrowleft_{x,y,z}\varepsilon(z,x)[\beta\circ\alpha(x),[y,z]_\beta]_\beta
&=\circlearrowleft_{x,y,z}\varepsilon(z,x)\beta\circ[\beta\circ\alpha(x),\beta\circ[y,z]]\\
&=\circlearrowleft_{x,y,z}\varepsilon(z,x)\beta^2\circ[\alpha(x),[y,z]]\\
&=\beta^2(\circlearrowleft_{x,y,z}\varepsilon(z,x)[\alpha(x),[y,z]])=0.
\end{align*}
In addition, the fact that $[\cdot,\cdot]_\beta$ is $\varepsilon$-skew-symmetric implies that  $(\mathfrak{g},[\cdot,\cdot]_\beta,\beta\circ\alpha,\varepsilon)$ is a color Hom-Lie algebra.\\ 
\end{proof}

As a particular case we obtain the following examples:
\begin{example}\
\begin{enumerate}
\item Let $(\mathfrak{g},[\cdot,\cdot],\varepsilon)$ be a color Lie algebra and $\alpha$ be a  color Lie algebra morphism, then $(\mathfrak{g},[\cdot,\cdot]_\alpha,\alpha,\varepsilon)$ is a multiplicative   color Hom-Lie algebra.
\item Let $(\mathfrak{g},[\cdot,\cdot],\alpha, \varepsilon)$ be a multiplicative color Hom-Lie algebra then $(\mathfrak{g},\alpha^{n}\circ[\cdot ,\cdot ],\alpha^{n+1}, \varepsilon)$ is a multiplicative color Hom-Lie algebra for any $n\geq 0$ .  
\end{enumerate}
\end{example}


In the following, we extend to color Hom-Lie algebras the study of quadratic Hom-Lie algebras introduced in \cite{benayadi2010hom}.\\   

\begin{definition}
Let $(\g, [\cdot,\cdot],\alpha, \varepsilon)$ be a color Hom-Lie  algebra and $B:\g\times\g\rightarrow\K$ be a bilinear form on $\g$.
\begin{enumerate}
\item[(i)] $B$ is said $\varepsilon$-symmetric if $B(x,y)=\varepsilon(x,y)B(y,x)$, (called also color-symmetry).
\item[(ii)] $B$ is said invariant if $B([x,y],z)=B(x,[y,z])$, \ \ $\forall x,y,z\in\g$.\\
\end{enumerate}
\end{definition}

\begin{definition}
 A color Hom-Lie  algebra $(\g,[\cdot,\cdot],\alpha, \varepsilon)$ is called \textit{quadratic} color Hom-Lie  algebra if there exists a non-degenerate, $\varepsilon$-symmetric and invariant bilinear form $B$ on $\g$ such that $\alpha$ is $B$-symmetric (i.e. $B(\alpha(x),y)=B(x, \alpha(y))$). It is denoted by $(\g,[\cdot,\cdot],\alpha, \varepsilon,B)$ and $B$ is called invariant scalar product.\\
\end{definition}

\begin{remark}
We recover  quadratic color Lie  algebras when $\alpha=id_{\g}$ and quadratic Hom-Lie superalgebras when $\Gamma = {\mathbb Z}_2$ and $\varepsilon(\bar{1},\bar{1}) = -1$.\\
\end{remark}

\begin{definition}
A  color Hom-Lie algebra $(\g, [\cdot,\cdot], \alpha, \varepsilon)$ is called \emph{Hom-quadratic} if there exists $(B,\beta)$, where $B$ is a non-degenerate and $\varepsilon$-symmetric bilinear form on $\g$ and $\beta:\g \longrightarrow \g$ is an even homomorphism such that
$$B(\alpha(x),y)= B(x, \alpha(y))\ \ \ \ \mbox{and}\ \ \ \ B(\beta(x), \left[y,z\right])=B(\left[x,y\right],\beta(z)), \ \ \forall\ x,y,z\in \g.$$
We call the second condition the $\beta$-invariance of $B$.\\
\end{definition}

\begin{remark}
We recover  quadratic color Hom-Lie algebras when  $\beta=id_{\g}$.\\
\end{remark}

In the following, we give a similar result to Theorem \ref{ConsWeakMorph}, in the case of quadratic color Hom-Lie algebras. To this end, we consider $(\g,[\cdot,\cdot],\alpha, \varepsilon, B)$ a quadratic color Hom-Lie algebra and define:\\

$\bullet$ $Aut_{s}(\g,B)$ as the set consisting of  all automorphisms $\varphi$ of $\g$ which satisfy $B(\varphi(x),y)=B(x,\varphi(y)), \ \forall x,y\in \g$. We call $Aut_{s}(\g,B)$ the set of symmetric automorphisms of $(\g,[\cdot,\cdot],\alpha, \varepsilon,B)$.\\

$\bullet$ The subcategory of quadratic color Hom-Lie algebras, in which the objets are quadratic color Hom-Lie algebras and the morphisms are  isometry morphisms of quadratic color Hom-Lie algebras. A morphism $f$ between two  quadratic color Hom-Lie algebras $ ({\g}_1, B_1)$ and $({\g}_2,B_2)$ is said to be an isometry if  $f: {\g}_1 \rightarrow {\g}_2 $ satisfies $B_1(x,y) = B_2(f(x),f(y)), \ \forall x,y\in {\g}_1$.\\


\begin{thm}\label{ConsWeakMorphQu}
The subcategory of quadratic color Hom-Lie algebras is closed under symmetric automorphisms.\\
\end{thm}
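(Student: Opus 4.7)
The plan is to show that, given a quadratic color Hom-Lie algebra $(\g,[\cdot,\cdot],\alpha,\varepsilon,B)$ and a symmetric automorphism $\varphi\in Aut_s(\g,B)$, the twisted structure $(\g,[\cdot,\cdot]_\varphi,\varphi\circ\alpha,\varepsilon,B_\varphi)$ is again a quadratic color Hom-Lie algebra, where $[\cdot,\cdot]_\varphi=\varphi\circ[\cdot,\cdot]$ and the key choice of scalar product is
$$B_\varphi(x,y):=B(\varphi(x),y).$$
The first ingredient is free: since $\varphi$ is in particular a weak morphism of $(\g,[\cdot,\cdot],\alpha,\varepsilon)$, Theorem \ref{ConsWeakMorph} already yields that $(\g,[\cdot,\cdot]_\varphi,\varphi\circ\alpha,\varepsilon)$ is a color Hom-Lie algebra. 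So only the quadratic data $(B_\varphi,\varphi\circ\alpha)$ needs to be checked.

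Next I would verify the formal properties of $B_\varphi$. Non-degeneracy follows immediately from the bijectivity of $\varphi$ and non-degeneracy of $B$. For $\varepsilon$-symmetry, using that $\varphi$ is even and $B$-symmetric,
$$B_\varphi(x,y)=B(\varphi(x),y)=\varepsilon(x,y)B(y,\varphi(x))=\varepsilon(x,y)B(\varphi(y),x)=\varepsilon(x,y)B_\varphi(y,x).$$
For the $\varphi\circ\alpha$-symmetry of $B_\varphi$, I combine the $B$-symmetry of $\varphi$, the $B$-symmetry of $\alpha$, and the identity $\varphi\circ\alpha=\alpha\circ\varphi$ (which holds because $\varphi$ is a morphism, not merely a weak one):
$$B_\varphi(\varphi\alpha(x),y)=B(\varphi^2\alpha(x),y)=B(\alpha\varphi^2(x),y)=B(\varphi^2(x),\alpha(y))=B(\varphi(x),\varphi\alpha(y))=B_\varphi(x,\varphi\alpha(y)).$$

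The core step is invariance of $B_\varphi$ under $[\cdot,\cdot]_\varphi$. Here the trick is to shift one copy of $\varphi$ across $B$ using $B$-symmetry, then exploit that $\varphi$ is an actual morphism of the original bracket. Explicitly,
\begin{align*}
B_\varphi([x,y]_\varphi,z)&=B(\varphi^2[x,y],z)=B(\varphi[x,y],\varphi(z))=B([\varphi(x),\varphi(y)],\varphi(z))\\
&=B(\varphi(x),[\varphi(y),\varphi(z)])=B(\varphi(x),\varphi[y,z])=B_\varphi(x,[y,z]_\varphi),
\end{align*}
where the fourth equality is the invariance of $B$ in the original algebra. This finishes the verification, and at the same time the $\varepsilon$-skew-symmetry of $[\cdot,\cdot]_\varphi$ is inherited from $[\cdot,\cdot]$ since $\varphi$ is even.

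I do not foresee a real obstacle; the whole difficulty is in guessing the correct twisted form $B_\varphi$. The candidate $B$ itself fails invariance (one gets $B(x,[y,\varphi(z)])$ instead of $B(x,\varphi[y,z])$), and candidates involving $\varphi^{-1}$ work but are less natural. Once $B_\varphi(x,y)=B(\varphi(x),y)$ is picked, every required property reduces to a one-line manipulation combining the $B$-symmetry of $\varphi$, the commutation $\varphi\alpha=\alpha\varphi$, and the fact that $\varphi$ preserves the bracket.
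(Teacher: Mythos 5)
Your proposal is correct and follows essentially the same route as the paper: the same twisted form $B_\varphi(x,y)=B(\varphi(x),y)$, the same use of $\varphi\alpha=\alpha\varphi$ and the $B$-symmetry of $\varphi$ for the compatibility of $\varphi\circ\alpha$ with $B_\varphi$, and the same one-line shift of $\varphi$ across $B$ for invariance. The only (harmless) difference is that you make explicit the appeal to Theorem \ref{ConsWeakMorph} and spell out the $\varepsilon$-symmetry computation, which the paper leaves as an immediate consequence.
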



\begin{proof}
Let $(\mathfrak{g},[\cdot,\cdot],\alpha,\varepsilon,B)$ be a quadratic color Hom-Lie algebra and $\beta$ be an element of $Aut_{s}(\g,B)$. Consider the bilinear form $B_{\beta}$ defined on $\g$ by $B_{\beta}(x,y) = B(\beta(x),y)$, $\forall x,y\in \g$. Using the fact that $\beta$ is $B$-symmetric  and $ \beta\circ\alpha=\alpha\circ\beta$, we obtain 
  $$B_{\beta}(\beta\circ\alpha(x),y)=B(\beta\circ \beta\circ\alpha(x),y)=B(\alpha\circ \beta(x),\beta(y))=B(\beta(x),\beta\circ\alpha(y))= B_{\beta}(x,\beta\circ\alpha(y))$$
and which means that $\beta\circ\alpha$ is $B_{\beta}$-symmetric. In addition
\begin{eqnarray*}
B_{\beta}([x,y]_\beta,z)=B(\beta\circ\beta [x,y],z)=B([\beta(x),\beta(y)],\beta(z))=B(\beta(x),[\beta(y),\beta(z)])=B_{\beta}(x,[y,z]_\beta), 
\end{eqnarray*}
 which means that $B_{\beta}$ is invariant with respect to ${\left[\cdot,\cdot\right]}_{\beta}$. The color symmetry and the non-degeneracy of $B_{\beta}$ follow from the symmetry of $B$ and the fact that  $\beta$ is bijective. Consequently, $(\mathfrak{g},[\cdot,\cdot]_\beta,\beta\circ\alpha,\varepsilon,B_{\beta})$ is a quadratic color Hom-Lie algebra.\\
\end{proof}

\begin{cor}\label{twistcentral2Qu}
Let $(\mathfrak{g},[\cdot,\cdot],\alpha, \varepsilon)$ be a multiplicative color Hom-Lie algebra.
If $B$ is an invariant scalar product on $\mathfrak{g}$, then  $(\mathfrak{g},\alpha^{n}\circ[\cdot ,...,\cdot ],\alpha^{n+1},B_{\alpha^{n}})$ is a Hom-quadratic multiplicative color Hom-Lie algebra and $(\mathfrak{g},\alpha^{n}\circ[\cdot ,...,\cdot ],\alpha^{n+1},B_\alpha)$, where $B_\alpha(x,y)=B(\alpha^n(x),y)=B(x,\alpha^n(y))$, is a quadratic multiplicative color Hom-Lie algebra.\\
\end{cor}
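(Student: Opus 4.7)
The plan is to recognize the corollary as a direct specialization of Theorem~\ref{ConsWeakMorphQu} to the symmetric automorphism $\beta=\alpha^{n}$, using the second item of the preceding example to supply the underlying multiplicative color Hom-Lie structure $(\g,\alpha^{n}\circ[\cdot,\cdot],\alpha^{n+1},\varepsilon)$.

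First I would verify that $\alpha^{n}$ belongs to $Aut_s(\g,B)$. Since $\alpha$ is a morphism by multiplicativity, so is every iterate $\alpha^{n}$, and $\alpha^{n}$ commutes with $\alpha$. Iterating the $B$-symmetry of $\alpha$ gives
\[
B(\alpha^{n}(x),y)=B(\alpha^{n-1}(x),\alpha(y))=\cdots=B(x,\alpha^{n}(y)),
\]
so $\alpha^{n}$ is $B$-symmetric. Provided $\alpha$ is bijective (a hypothesis tacitly needed for $B_{\alpha^{n}}$ to remain non-degenerate), $\alpha^{n}$ is a bijection, whence $\alpha^{n}\in Aut_s(\g,B)$.

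Invoking Theorem~\ref{ConsWeakMorphQu} with $\beta=\alpha^{n}$ then directly yields the quadratic color Hom-Lie algebra
\[
(\g,[\cdot,\cdot]_{\alpha^{n}},\alpha^{n}\circ\alpha,\varepsilon,B_{\alpha^{n}})=(\g,\alpha^{n}\circ[\cdot,\cdot],\alpha^{n+1},\varepsilon,B_{\alpha^{n}}),
\]
which is the second assertion of the corollary, since by definition $B_\alpha$ coincides with $B_{\alpha^{n}}$. The first assertion, that the same datum is Hom-quadratic, is then immediate: every quadratic color Hom-Lie algebra is Hom-quadratic, as one may simply take $\beta=\mathrm{id}_\g$ in the Hom-quadratic definition so that $\beta$-invariance reduces to ordinary invariance.

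The only genuine subtlety I anticipate is the implicit bijectivity requirement on $\alpha$, which is needed to transfer non-degeneracy from $B$ to $B_{\alpha^{n}}$. All the remaining verifications (multiplicativity of the twisted bracket, $\varepsilon$-skew-symmetry, the $\varepsilon$-Hom-Jacobi identity, $\varepsilon$-symmetry of $B_{\alpha^{n}}$, and its invariance with respect to $[\cdot,\cdot]_{\alpha^{n}}$) are the routine iterations already carried out in the proofs of Theorems~\ref{ConsWeakMorph} and~\ref{ConsWeakMorphQu}, and no new ingredient is required.
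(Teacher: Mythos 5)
Your proposal is correct and matches the paper's (implicit) derivation: the corollary is stated without proof as an immediate specialization of Theorem~\ref{ConsWeakMorphQu} to $\beta=\alpha^{n}$, combined with item~2 of the example following Theorem~\ref{ConsWeakMorph}, which is exactly the route you take. Your remark that bijectivity of $\alpha$ is tacitly required (so that $\alpha^{n}\in Aut_{s}(\g,B)$ and $B_{\alpha^{n}}$ remains non-degenerate) is a fair reading of a hypothesis the paper leaves unstated.
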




\section{Constructions and examples of quadratic color Hom-Lie  algebras} 


In this section, we describe in one hand some constructions of quadratic color Hom-Lie algebras and in the other hand they are used to provide examples.\\


Let $(\mathfrak{g},[\cdot,\cdot],\varepsilon)$ be a color Lie algebra and $End(\g)$ be the set of all homogeneous self linear maps on the  vector space $\g$. Clearly $End(\g)$ is $\Gamma$-graded and provided with the color-commutator introduced in \cite{yuan2010hom} and defined  by 
$${\left[f,g\right]}_{\varepsilon}= f\circ g - \varepsilon(f,g)\ g\circ f, \ \ \ \forall f\in {(End(\g))}_{f}, \ g\in {(End(\g))}_{g}$$
is a color Lie algebra.\\
 
\begin{definition}
The vector subspace $Cent(\g)$ of $End(\g)$ defined by
\begin{equation*}\label{Centroidalg}
Cent(\g)=\{\theta\in End(\mathfrak{g}):\ \theta ([x,y])=[\theta(x),y]=\varepsilon(\theta,x)[x,\theta(y)],\ \forall x,y\in\g\}.
\end{equation*}
 is a subalgebra of $End(\g)$ which we call the centroid of $\g$.\\ 
 
\end{definition}

%

In the following corollary, we  construct  color Hom-Lie algebras starting from a color Hom-Lie algebra and an even element in its centroid. This result is a graded version of a result obtained in \cite{benayadi2010hom}.\\

\begin{proposition}\label{constrcentr}
Let $(\mathfrak{g},[\cdot,\cdot], \alpha,\varepsilon)$ be a color Hom-Lie algebra and $\theta$ be an even element in the centroid of $\g$. Then we have the following  color Hom-Lie algebras
\begin{enumerate}
\item[(i)] $(\mathfrak{g},[\cdot,\cdot], \theta\circ\alpha,\varepsilon), \ \ \ \  (\g,[\cdot,\cdot]_1^{\theta},\theta\circ \alpha, \varepsilon),\ \ \ \ (\g,[\cdot,\cdot]_2^{\theta},\theta\circ \alpha, \varepsilon)$,
\item[(ii)] $(\mathfrak{g},[\cdot,\cdot],\alpha\circ \theta,\varepsilon),\ \ \ \ (\g,[\cdot,\cdot]_1^{\theta},\alpha\circ \theta, \varepsilon),\ \ \ \ (\g,[\cdot,\cdot]_2^{\theta},\alpha\circ \theta, \varepsilon)$,
\end{enumerate}
where the two even brackets $[\cdot,\cdot]_1^{\theta}$ and $[\cdot,\cdot]_2^{\theta}$ on $\g$ are defined by
$$[x,y]_1^{\theta}=[\theta(x),y]\ \ \ \ \mbox{and}\ \ \ [x,y]_2^{\theta}=[\theta(x),\theta(y)]\ \ \ \forall x,y\in \g.$$\\
\end{proposition}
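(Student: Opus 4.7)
The verification reduces, for each of the six tuples listed, to checking $\varepsilon$-skew-symmetry of the bracket together with the $\varepsilon$-Hom-Jacobi identity for the prescribed twist. The main tool is the centroid identity
\[
\theta([u,v]) \;=\; [\theta(u),v] \;=\; [u,\theta(v)],
\]
valid because $\theta$ is even, so the scalar $\varepsilon(\theta,\cdot)=1$ drops out. This identity allows us to move $\theta$ freely in and out of brackets.

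An immediate consequence is the simplification $[x,y]_1^{\theta}=\theta([x,y])$ and $[x,y]_2^{\theta}=\theta^{2}([x,y])$, so the two twisted brackets are even and $\varepsilon$-skew-symmetric whenever $[\cdot,\cdot]$ is. The same simplification guarantees that any $\varepsilon$-Hom-Jacobi sum built from $[\cdot,\cdot]_i^\theta$ collapses, after iterated use of the centroid identity, into $\theta^{k}$ applied to
\[
S \;:=\; \circlearrowleft_{x,y,z}\varepsilon(z,x)\,[\alpha(x),[y,z]],
\]
which vanishes by hypothesis.

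For part (i), the twist $\theta\circ\alpha$ presents the first argument as $\theta(\alpha(x))$, which the centroid property factorises directly; a bookkeeping of how many times $\theta$ is pulled out shows that each of the three cyclic sums equals $\theta^{k}(S)=0$ for $k=1,3,5$, corresponding respectively to the brackets $[\cdot,\cdot]$, $[\cdot,\cdot]_1^{\theta}$ and $[\cdot,\cdot]_2^{\theta}$.

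For part (ii) the twist is $\alpha\circ\theta$, and since $\theta$ is not assumed to commute with $\alpha$ one cannot pull $\theta$ off $\alpha(\theta(x))$ by the centroid property alone. The key auxiliary identity I plan to establish is
\[
[\alpha\theta(x),[y,z]] \;=\; \theta\bigl([\alpha(x),[y,z]]\bigr) \qquad \forall\,x,y,z\in\g,
\]
proved by substituting $\theta(x)$ into the $\varepsilon$-Hom-Jacobi identity of the original algebra, simplifying the $\varepsilon$-prefactors via the evenness of $\theta$, extracting $\theta$ from the two non-leading terms via the centroid property, and invoking $S=0$ to replace the resulting expression by the leading term. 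With this lemma in hand the three cases of (ii) reduce, by the same bookkeeping as in (i), to $\theta^{k}(S)=0$ with $k=1,3,5$. The main obstacle is precisely this auxiliary identity: the centroid hypothesis is too weak to give $\alpha\theta=\theta\alpha$, and it is the Hom-Jacobi identity itself that forces $\alpha\theta-\theta\alpha$ to act trivially against any iterated bracket, which is exactly what the proof requires.
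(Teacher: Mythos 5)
Your proposal is correct and follows essentially the same route as the paper: part (i) by pulling $\theta$ out of the leading slot and counting powers of $\theta$, and part (ii) by substituting $\theta(x)$ into the $\varepsilon$-Hom-Jacobi identity and using the centroid property on the two non-leading terms. Your pointwise auxiliary identity $[\alpha\theta(x),[y,z]]=\theta([\alpha(x),[y,z]])$ is just a slightly sharper packaging of the paper's cyclic-sum computation (which lands on $-2\theta(S)=0$), and it handles the twisted-bracket cases of (ii) cleanly.
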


\begin{proof}
$(i)$ Let $x$, $y$ and $z$ be  homogeneous elements in $\g$. The fact that $\theta$ is an even element of the centroid of $\g$ implies
$$\varepsilon(z,x) \big[ \theta\circ \alpha(x), [y,z]\big] = \theta \big( \varepsilon(z,x) \big[ \alpha(x), [y,z]\big]\big).$$
Consequently,  
$$\circlearrowleft_{x,y,z} \varepsilon(z,x) \big[ \theta\circ \alpha(x), [y,z]\big] = \theta \big(\circlearrowleft_{x,y,z} \varepsilon(z,x) \big[ \alpha(x), [y,z]\big] \big) = 0.$$
So, we deduce that $(\mathfrak{g},[\cdot,\cdot], \theta\circ\alpha,\varepsilon)$ is a color Hom-Lie algebra. To prove that $(\g,[\cdot,\cdot]_i^{\theta},\theta\circ \alpha, \varepsilon)$, $i=1,2$, are color Hom-Lie algebras, we need only to check that $(\g,[\cdot,\cdot]_i^{\theta},\alpha, \varepsilon)$, $i=1,2$, are color Hom-Lie algebras. Using again the fact that $\theta$ an even element in the centroid of $\g$, it comes 
$$\circlearrowleft_{x,y,z}\varepsilon(z,x)[\alpha(x),[y,z]_i^{\theta}]_i^{\theta} = \circlearrowleft_{x,y,z} [\theta\circ\alpha (x),[\theta(y),z]]=\theta^2\big(\circlearrowleft_{x,y,z}\varepsilon(z,x)[\alpha(x),[y,z]]\big)=0, \ \  i=1,2.$$
$(ii)$  Applying the $\varepsilon$-Hom-Jacobi identity to $\theta(x)$, $y$ and $z$ in $\g$, it follows 
$$\varepsilon(z,x) \big[\alpha\circ\theta(x), [y,z]\big] = - \theta \big( \varepsilon(x,y)\big[\alpha(y), [z,x]\big] + \varepsilon(y,z) \big[\alpha(z), [x,y]\big]\big).$$ 
Consequently
\begin{eqnarray*}
\circlearrowleft_{x,y,z}\varepsilon(z,x) \big[\alpha\circ\theta(x), [y,z]\big] = &-& \theta \big(\  \varepsilon(x,y)\big[\alpha(y), [z,x]\big] + \varepsilon(y,z) \big[\alpha(z), [x,y]\big]\ \big)\\
&-& \theta \big(\ \varepsilon(y,z)\big[\alpha(z), [x,y]\big] + \varepsilon(z,x) \big[\alpha(x), [y,z]\big]\ \big)\\
&-& \theta \big(\ \varepsilon(z,x)\big[\alpha(x), [y,z]\big] + \varepsilon(x,y) \big[\alpha(y), [z,x]\big]\ \big).
\end{eqnarray*}
Hence, we deduce that $(\g, [\cdot,\cdot], \alpha\circ \theta, \varepsilon)$ is a color Hom-Lie algebra. Since $(\g,[\cdot,\cdot]_i^{\theta},\alpha, \varepsilon)$, $i=1,2$, are color Hom-Lie algebra, reasoning similarly as above  proves the result.\\

\end{proof}

As an application of  Proposition \ref{constrcentr}, we construct a color Hom-Lie algebra such that the  twisted map is an element of its centroid. We assume that $(\g,[\cdot ,\cdot ],\alpha,\varepsilon)$ is a color Hom-Lie algebra and define the following vector space $\frak U$ of $End(\g)$ consisting of  even linear maps $\sigma$ on $\frak U$ as follows: 
$$\frak U =\{u\in End(\g), \ \ u\circ\alpha=\alpha\circ u\}$$ 
and 
$$\sigma: {\frak U} \longrightarrow {\frak U}\ \ ; \ \ \sigma(u)= \alpha\circ u.$$
Then, ${\frak U}$ is a color subalgebra of $End(\g)$. Moreover, $({\frak U}, {\left\{\cdot,\cdot\right\}}_1^{\sigma},\sigma)$ and $({\frak U}, {\left\{\cdot,\cdot\right\}}_2^{\sigma},\sigma)$ are two color Hom-Lie algebras. Indeed, by a simple computation, it comes that ${\frak U}$ is a color subalgebra of $End(\g)$. In addition, for two homogeneous elements $u$ and $v$ in ${\frak U}$, we have 
\begin{eqnarray*}
\sigma([u,v])=\alpha( u\circ v-\varepsilon(u,v) v\circ u )=(\alpha\circ u)\circ v-\varepsilon(u,v) v\circ(\alpha\circ u)=[\sigma(u),v]. 
\end{eqnarray*}
Now applying Proposition \ref{constrcentr}, we conclude that $({\frak U}, [\cdot,\cdot ]_1^{\sigma},\sigma)$ and $({\frak U}, [\cdot,\cdot]_2^{\sigma},\sigma)$ are two color Hom-Lie algebras.\\

We consider now constructions  using  elements of  the centroid in  the quadratic case.\\

\begin{proposition}
Let $(\g,\left[\cdot,\cdot\right],\varepsilon,B)$ be a quadratic color Lie algebra and $\theta$ be an invertible and $B$-symmetric element in the centroid of $\g$. Then, the two color Hom-Lie algebras   $(\g,{[\cdot,\cdot]}_1^{\theta},\theta,\varepsilon)$ and $(\g,{[\cdot,\cdot]}_2^{\theta},\theta,\varepsilon)$ defined in Proposition \ref{constrcentr} provided with the even bilinear form $B_{\theta}$ such that $B_{\theta}(x,y)=B(\theta(x),y)$,$\forall x,y\in \g$ are two quadratic color Hom-Lie algebras.\\ 
\end{proposition}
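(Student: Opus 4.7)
The plan is to check, for each of the two candidate structures $(\g,[\cdot,\cdot]_i^{\theta},\theta,\varepsilon,B_\theta)$ with $i=1,2$, the four conditions that define a quadratic color Hom-Lie algebra: (a) $B_\theta$ is non-degenerate, (b) $B_\theta$ is $\varepsilon$-symmetric, (c) $\theta$ is $B_\theta$-symmetric, and (d) $B_\theta$ is invariant with respect to $[\cdot,\cdot]_i^{\theta}$. The underlying color Hom-Lie structure has already been established in Proposition \ref{constrcentr} (applied with $\alpha=\mathrm{id}_{\g}$), so that part requires no additional work.

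Conditions (a) and (c) are almost immediate: non-degeneracy of $B_\theta(x,y)=B(\theta(x),y)$ follows from the fact that $\theta$ is bijective and $B$ is non-degenerate, while $B_\theta(\theta(x),y)=B(\theta^{2}(x),y)=B(\theta(x),\theta(y))=B_\theta(x,\theta(y))$ follows just from $B$-symmetry of $\theta$. For (b), I would expand
\[
B_\theta(x,y)=B(\theta(x),y)=\varepsilon(\theta(x),y)B(y,\theta(x))=\varepsilon(x,y)B(\theta(y),x)=\varepsilon(x,y)B_\theta(y,x),
\]
using successively the $\varepsilon$-symmetry of $B$, the fact that $\theta$ is even (so $\varepsilon(\theta(x),y)=\varepsilon(x,y)$), and the $B$-symmetry of $\theta$ once more.

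The real content is (d). For bracket $[x,y]_1^{\theta}=[\theta(x),y]$, I would exploit the centroid identity $\theta[u,v]=[\theta(u),v]=[u,\theta(v)]$ (the last equality holds because $\theta$ is even, so $\varepsilon(\theta,x)=1$) to write
\[
B_\theta([x,y]_1^{\theta},z)=B(\theta[\theta(x),y],z)=B(\theta^{2}(x),[y,z])=B(\theta(x),\theta[y,z])=B(\theta(x),[\theta(y),z])=B_\theta(x,[y,z]_1^{\theta}),
\]
using the invariance of $B$ at the second equality, $B$-symmetry of $\theta$ at the third, and the centroid property again at the fourth. For bracket $[x,y]_2^{\theta}=[\theta(x),\theta(y)]$, the same idea gives $\theta[\theta(x),\theta(y)]=\theta^{3}[x,y]$, so
\[
B_\theta([x,y]_2^{\theta},z)=B(\theta^{3}[x,y],z)=B(\theta^{3}(x),[y,z])=B(\theta(x),\theta^{2}[y,z])=B(\theta(x),[\theta(y),\theta(z)])=B_\theta(x,[y,z]_2^{\theta}),
\]
again combining invariance of $B$, the iterated $B$-symmetry of $\theta$, and the centroid property.

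I expect the main obstacle to be purely notational: keeping track of where $\theta$ acts and how many times. The key algebraic facts behind every step are only three, namely that $\theta$ commutes with brackets via the centroid relation, that $\theta$ is $B$-symmetric (so powers of $\theta$ can be freely shifted across $B$), and that $\theta$ is even (so the bicharacter does not produce extra signs when it is moved past homogeneous elements). Once those are arranged properly, no serious computation remains.
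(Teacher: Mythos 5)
Your proposal is correct and follows essentially the same route as the paper: a direct verification combining the centroid relation, the $B$-symmetry of $\theta$, and the evenness of $\theta$ (the paper shuffles one $\theta$ at a time across $B$ while you first collect powers $\theta^{2}$, $\theta^{3}$, but the chain of identities is the same). You are in fact slightly more complete than the paper, which asserts the $\varepsilon$-symmetry, non-degeneracy and $B_\theta$-symmetry of the twist without writing them out.
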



\begin{proof}Since $B$ is non degenerate color symmetric bilinear form and $\theta$ invertible then $\beta_\theta$ is a nondegenerate color symmetric bilinear form on $\g$. Moreover  for  homogeneous elements  $x,y,z$ in $\g$ we have
\begin{align*}
B_\theta({[x,y]}_1^{\theta},z)&=B(\theta[\theta (x),y],z) = B([\theta (x),y],\theta (z))=B(\theta (x),[y,\theta (z)])=B_\theta( x,[\theta (y), z])=B_\theta( x,{[y, z]}_1^{\theta}),\\
\end{align*}
and 
\begin{align*}
B_\theta({[x,y]}_2^{\theta},z)&=B(\theta[\theta (x),\theta(y)],z) = B([\theta (x),\theta(y)], \theta(z))=B(\theta (x),[\theta(y),\theta (z)])=B_\theta( x,{[y,z]}_2^{\theta}).
\end{align*}
\end{proof}


\begin{definition}
A \textit{color Hom-associative } algebra is a tuple $(A,\mu,\alpha,\varepsilon)$ consisting of a $\Gamma$-graded vector space $A$, an even bilinear map $\mu:A\times A\rightarrow A$ $($i.e. $\mu(A_a,A_b)\subset A_{a+b})$ and an even homomorphism $\alpha:A\rightarrow A$ such that for homogeneous elements $x,y,z\in A$ we have 
\begin{align*}
\mu(\alpha(x),\mu(y,z))=\mu(\mu(x,y),\alpha(z)).\\
\end{align*}
In the case where $\mu(x,y)=\varepsilon(x,y)\mu(y,x)$,  the color Hom-associative  algebra  $(A,\mu,\alpha)$ is called  commutative.\\
\end{definition}

\begin{definition}
A color Hom-associative algebra $(A,\mu,\alpha,\varepsilon)$ is called  \emph{quadratic}  if there exists a  color-symmetric, invariant and nondegenerate  bilinear form $B$ on $\g$ such that $\alpha$ is $B$-symmetric.\\
\end{definition}

\begin{thm}\label{constrHomcolor-Ass}
Given a quadratic color Hom-associative algebra $(A,\mu, \alpha,\varepsilon, B)$, one can define  color commutator on homogeneous elements $x,y\in A$ by  $$[x,y]=\mu(x,y)-\varepsilon(x,y)\mu(y,x),$$
and then extended by linearity to all elements. The tuple  $(A,[\cdot,\cdot],\alpha,\varepsilon,B)$ determines a quadratic color Hom-Lie algebra.\\
\end{thm}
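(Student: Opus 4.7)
The plan is to verify the four conditions needed to turn the given data into a quadratic color Hom-Lie algebra: (i) $\varepsilon$-skew-symmetry of the commutator, (ii) the $\varepsilon$-Hom-Jacobi identity, (iii) invariance of $B$ with respect to the new bracket, and (iv) the remaining data ($\varepsilon$-symmetry, non-degeneracy of $B$, and $B$-symmetry of $\alpha$), which transfer verbatim from the Hom-associative structure.

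First, I would handle $\varepsilon$-skew-symmetry. For homogeneous $x,y \in A$,
$$-\varepsilon(x,y)[y,x] = -\varepsilon(x,y)\mu(y,x) + \varepsilon(x,y)\varepsilon(y,x)\mu(x,y) = \mu(x,y) - \varepsilon(x,y)\mu(y,x) = [x,y],$$
where I used $\varepsilon(x,y)\varepsilon(y,x)=1$ from \eqref{condi1-bicharacter}. Note that this only needs the bracket to be homogeneous (which follows from $\mu$ being even), so the argument is clean.

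The main obstacle, and where the bulk of the work lies, is establishing the $\varepsilon$-Hom-Jacobi identity \eqref{JacobiId1}. I would expand $[\alpha(x),[y,z]]$ into four terms of the form $\mu(\alpha(x),\mu(y,z))$, $\mu(\alpha(x),\mu(z,y))$, $\mu(\mu(y,z),\alpha(x))$, $\mu(\mu(z,y),\alpha(x))$, each carrying an appropriate bicharacter factor. Multiplying by $\varepsilon(z,x)$ and summing cyclically over $(x,y,z)$ yields twelve terms. Using color Hom-associativity $\mu(\alpha(x),\mu(y,z)) = \mu(\mu(x,y),\alpha(z))$ and its swapped variants, each term of shape $\mu(\alpha(\cdot),\mu(\cdot,\cdot))$ can be rewritten as $\mu(\mu(\cdot,\cdot),\alpha(\cdot))$. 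The bicharacter identities \eqref{condi2-bicharacter}–\eqref{condi3-bicharacter} then let one match prefactors, and the twelve terms cancel in six pairs. The delicate point is bookkeeping the signs: after pushing everything to the form $\mu(\mu(\cdot,\cdot),\alpha(\cdot))$, each term coming from the $\mu(\alpha(x),\mu(y,z))$ side should cancel exactly one term coming from the $\mu(\mu(\cdot,\cdot),\alpha(\cdot))$ side of a different cyclic summand.

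Finally, I would verify invariance of $B$ with respect to $[\cdot,\cdot]$. Using invariance of $B$ with respect to $\mu$, namely $B(\mu(u,v),w) = B(u,\mu(v,w))$, and the $\varepsilon$-symmetry of $B$:
$$B([x,y],z) = B(\mu(x,y),z) - \varepsilon(x,y)B(\mu(y,x),z) = B(x,\mu(y,z)) - \varepsilon(x,y)B(y,\mu(x,z)).$$
Applying $\varepsilon$-symmetry to $B(y,\mu(x,z))$ and then invariance to the transposed form converts the second term into $\varepsilon(y,z)B(x,\mu(z,y))$, so that $B([x,y],z) = B(x,\mu(y,z)) - \varepsilon(y,z)B(x,\mu(z,y)) = B(x,[y,z])$. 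The conditions that $B$ is non-degenerate, $\varepsilon$-symmetric, and that $\alpha$ is $B$-symmetric come directly from the hypothesis that $(A,\mu,\alpha,\varepsilon,B)$ is quadratic Hom-associative, completing the verification.
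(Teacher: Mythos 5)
Your proposal is correct and follows essentially the same route as the paper: the only step the paper proves in detail is the invariance of $B$ under the color commutator, which you establish by the same interplay of $\varepsilon$-symmetry and associativity of $B$ (in a slightly different order), with the bicharacter prefactors collapsing to $\varepsilon(y,z)$ exactly as in the paper's computation. For the $\varepsilon$-Hom-Jacobi identity the paper simply cites Proposition~3.13 of Yuan's paper rather than performing the twelve-term cancellation you sketch, but your outline of that cancellation is the standard argument and is sound.
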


\begin{proof}The fact that $(A,[\cdot,\cdot],\alpha,\varepsilon)$ is a color Hom-Lie algebra was proved in  \cite[Proposition 3.13]{yuan2010hom}. By using the fact that  $\varepsilon$ is bicharacter, we have for homogeneous elements $x,y,z$ in $A$
\begin{align*}
B([x,y],z)&=B(\mu(x,y)-\varepsilon(x,y)\mu(y,x),z)\\
&=B(\mu(x,y),z)-\varepsilon(x,y)B(\mu(y,x),z)\\
&=B(x,\mu(y,z))-\varepsilon(x,y)\varepsilon(x+y,z)B(z,\mu(y,x))\\
&=B(x,\mu(y,z))-\varepsilon(x,y)\varepsilon(x+y,z)B(\mu(z,y),x)\\
&=B(x,\mu(y,z))-\varepsilon(x,y)\varepsilon(x+y,z)\varepsilon(z+y,x)B(x,\mu(z,y))\\
&=B(x,\mu(y,z))-\varepsilon(y,z)B(x,\mu(z,y))\\
&=B(x,\mu(y,z))-B(x,\varepsilon(y,z)\mu(z,y))=B(x,[y,z]).
\end{align*}
Thus $B$ is invariant, hence  $(A,[\cdot,\cdot],\alpha,\varepsilon,B)$ is a quadratic color Hom-Lie algebra.\\
\end{proof}

Recall that if  $V$ and $V'$ are two $\Gamma$-graded vector spaces, then the tensor product $V\otimes V'$ is still a $\Gamma$-graded vector space such that for $\delta\in \Gamma$ we have
$${(V\otimes V')}_{\delta} = \sum_{\delta=\gamma + \gamma'} V_{\gamma}\otimes V_{\gamma'},$$
where $\gamma, \gamma'\in \Gamma$.


\begin{thm}\label{tensor-product}
Let $(A,\mu,\alpha_A)$ be a  commutative color Hom-associative algebra and $(\mathfrak{g}, {[\cdot,\cdot]}_{\mathfrak g}, \alpha_{\mathfrak g}, \varepsilon)$ be a color Hom-Lie algebra.
\begin{enumerate}
\item [(i)] The tensor product $(\g\otimes A, [\cdot,\cdot], \alpha, \varepsilon)$ is a color Hom-Lie algebra such that
\begin{eqnarray*}
\alpha(x\otimes a)&=&\alpha_{\g}(x)\otimes\alpha_A(a),\\
\left[ x\otimes a, y\otimes b \right]&=&\varepsilon(a,y)\ { \left[x,y\right]}_{\mathfrak g}\otimes \mu(a,b),\ \ \ \forall x,\, y\in {\mathfrak g}, \ a,b\in A.
\end{eqnarray*}
\item[(ii)] If $B_A$ and $B_{\g}$ are respectively associative scalar product on $A$ and invariant scalar product on $\g$, then $(\g\otimes A, [\cdot,\cdot], \alpha, \varepsilon)$ is a quadratic color Hom-Lie algebra such that the invariant scalar product $B$ on $\g\otimes A$ is given by
$$B(x\otimes a, y\otimes b):= \varepsilon(a,y)\  B_{\frak g}(x,y)B_A(a,b)\ \ \ \forall x,\, y\in {\mathfrak g}, \ a,b\in A.$$
\end{enumerate}
\end{thm}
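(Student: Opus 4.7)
My plan is to verify the color Hom-Lie axioms for $(\mathfrak g \otimes A, [\cdot,\cdot], \alpha, \varepsilon)$ in part (i), then check the four defining properties of an invariant scalar product in part (ii). Throughout I will take $X = x\otimes a$, $Y = y\otimes b$, $Z = z\otimes c$ with $x,y,z$ and $a,b,c$ homogeneous, since the general case then follows by bilinearity. The degree conventions $\deg(X) = x+a$ and $\alpha$ even are immediate from the definitions.

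For part (i), I would first check $\varepsilon$-skew-symmetry. Expanding $[X,Y]$ and $-\varepsilon(X,Y)[Y,X]$ and using skew-symmetry of $[\cdot,\cdot]_{\mathfrak g}$ together with color-commutativity of $\mu$, the bicharacter factors collapse via the cancellations $\varepsilon(x,b)\varepsilon(b,x)=1$, $\varepsilon(a,b)\varepsilon(b,a)=1$, $\varepsilon(x,y)\varepsilon(y,x)=1$, leaving exactly $\varepsilon(a,y)[x,y]_{\mathfrak g}\otimes \mu(a,b)$. The core of the proof is the $\varepsilon$-Hom-Jacobi identity. I would expand each of the three cyclic terms $\varepsilon(Z,X)[\alpha(X),[Y,Z]]$, $\varepsilon(X,Y)[\alpha(Y),[Z,X]]$, $\varepsilon(Y,Z)[\alpha(Z),[X,Y]]$ and pull out the scalar coefficients produced by the braided tensor bracket. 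After simplification I expect to obtain three terms of the shape
\begin{align*}
&\varepsilon(z,x)\varepsilon(c,a)\,[\alpha_{\mathfrak g}(x),[y,z]_{\mathfrak g}]_{\mathfrak g}\otimes \mu(\alpha_A(a),\mu(b,c)),\\
&\varepsilon(x,y)\varepsilon(a,b)\,[\alpha_{\mathfrak g}(y),[z,x]_{\mathfrak g}]_{\mathfrak g}\otimes \mu(\alpha_A(b),\mu(c,a)),\\
&\varepsilon(y,z)\varepsilon(b,c)\,[\alpha_{\mathfrak g}(z),[x,y]_{\mathfrak g}]_{\mathfrak g}\otimes \mu(\alpha_A(c),\mu(a,b)),
\end{align*}
after the common factor $\varepsilon(b,z)\varepsilon(a,y)\varepsilon(c,x)$ is pulled out in front.

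The key move, and the step I expect to be the main obstacle, is to exhibit the three $A$-tensors as scalar multiples of a single reference element. Using color Hom-associativity $\mu(\alpha_A(u),\mu(v,w))=\mu(\mu(u,v),\alpha_A(w))$ and color commutativity $\mu(u,v)=\varepsilon(u,v)\mu(v,u)$, I would show
\begin{align*}
\mu(\alpha_A(b),\mu(c,a)) &= \varepsilon(b,a)\varepsilon(c,a)\,\mu(\alpha_A(a),\mu(b,c)),\\
\mu(\alpha_A(c),\mu(a,b)) &= \varepsilon(c,a)\varepsilon(c,b)\,\mu(\alpha_A(a),\mu(b,c)).
\end{align*}
Substituting these back, the pairs $\varepsilon(a,b)\varepsilon(b,a)$ and $\varepsilon(b,c)\varepsilon(c,b)$ cancel, and the three lines acquire the common coefficient $\varepsilon(c,a)$ in the $A$-factor. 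What remains is $\varepsilon(c,a)$ times the $\varepsilon$-Hom-Jacobi sum in $\mathfrak g$, applied to $(x,y,z)$, which vanishes. This finishes part (i).

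For part (ii), the bilinear form $B(X,Y)=\varepsilon(a,y)B_{\mathfrak g}(x,y)B_A(a,b)$ is non-degenerate as a braided tensor product of non-degenerate forms. Its $\varepsilon$-symmetry reduces, after substituting the definition, $\varepsilon$-symmetry of $B_{\mathfrak g}$ and $B_A$, and color-commutativity of $\mu$, to the identity $\varepsilon(x+a,y+b)\varepsilon(b,x)\varepsilon(y,x)\varepsilon(b,a)=\varepsilon(a,y)$, which follows from the bicharacter relations. The $B$-symmetry of $\alpha$ transfers directly from the corresponding properties of $\alpha_{\mathfrak g}$ and $\alpha_A$ since $\alpha$ is even. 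Finally, invariance $B([X,Y],Z)=B(X,[Y,Z])$ reduces, after using invariance of $B_{\mathfrak g}$ and $B_A$, to the scalar identity $\varepsilon(a,y)\varepsilon(a+b,z)=\varepsilon(b,z)\varepsilon(a,y+z)$, which again is immediate from the bicharacter axioms. All pieces being in place, $(\mathfrak g\otimes A,[\cdot,\cdot],\alpha,\varepsilon,B)$ is a quadratic color Hom-Lie algebra.
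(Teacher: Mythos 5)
Your proof is correct and follows essentially the same route as the paper's: expand the three cyclic Hom-Jacobi terms, use Hom-associativity together with color-commutativity of $\mu$ to rewrite the three $A$-factors as bicharacter multiples of the single element $\mu(\alpha_A(a),\mu(b,c))$, and let the bicharacter identities collapse the remaining scalars onto the $\varepsilon$-Hom-Jacobi sum in $\mathfrak g$; part (ii) is the same direct verification of invariance and $B$-symmetry of $\alpha$. You are in fact somewhat more explicit than the paper about the final scalar bookkeeping and about the $\varepsilon$-skew-symmetry of the bracket and $\varepsilon$-symmetry of $B$, which the paper leaves implicit.
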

\medskip


\begin{proof}
By using the definition of bracket and the commutativity of the product on the color Hom-associative algebra $ A$, we have

$$\varepsilon(z+c,x+a)\big[\alpha(x\otimes a), [y\otimes b, z\otimes c]\big] =  \varepsilon(z+c,x+a)\varepsilon(b,z)\varepsilon(a,y+z)\big[\alpha_{\g}(x),[y,z]\big]\otimes \mu(\alpha_A(a),\mu(b,c)).$$

$$\varepsilon(x+a,y+b)\big[\alpha(y\otimes b), [z\otimes c, x\otimes a]\big] = \varepsilon(x+a,y+b)\varepsilon(c,x)\varepsilon(b,z+x)\varepsilon(b+c,a) \big[\alpha_{\g}(y),[z,x]\big]\otimes \mu(\alpha_A(a),\mu(b,c)).$$

$$\varepsilon(y+b,z+c)\big[\alpha(z\otimes c), [x\otimes a, y\otimes b]\big] =  \varepsilon(y+b,z+c)\varepsilon(a,y)\varepsilon(c,x+y)\varepsilon(c,a+b) \big[\alpha_{\g}(z),[x,y]\big]\otimes \mu(\alpha_A(a),\mu(b,c)).$$
\medskip

Hence,  using the identities \eqref{condi1-bicharacter}, \eqref{condi2-bicharacter} and \eqref{condi3-bicharacter}, we prove  the $\varepsilon$-Hom-Jacobi identity. Moreover, we have
\begin{eqnarray*}
B(\big[x\otimes a, y\otimes b\big], z\otimes c) &=& \varepsilon(a,y)\varepsilon(a+b,z) B_{\frak g}([x,y],z)B_A(\mu(a,b),c)\\
&=& \varepsilon(a,y+z)\varepsilon(b,z) B_{\frak g}(x,[y,z])B_A(a,\mu(b,c))\\
&=& \varepsilon(b,z) B(x\otimes a, [y,z]\otimes \mu(b,c)) = B(x\otimes a, [y\otimes b, z\otimes c]).
\end{eqnarray*}
In addition,
\begin{eqnarray*}
B(\alpha(x\otimes a), y\otimes b) &=& \varepsilon(a,y) B_{\frak g}(\alpha_{\g}(x),y)B_A(\alpha_A(a),b)\\
&=& \varepsilon(a,y) B_{\frak g}(x,\alpha_{\g}(y))B_A(a,\alpha_A(b)) = B(x\otimes a, \alpha(y\otimes b)).\\
\end{eqnarray*}

Consequently, we obtain $(\g\otimes A, \alpha, [\cdot,\cdot],\varepsilon, B)$ is a quadratic color Hom-Lie algebra.\\
\end{proof}

\subsection{Examples of quadratic Hom-Lie superalgebras}
Color Hom-Lie  algebras have been studied in \cite{yuan2010hom} where many examples  are provided. In the following, we give some examples of quadratic color Hom-Lie algebras by using  methods of construction mentioned above.\\

\begin{example}\label{Hom-sl2}
The twisting maps which make $\mathfrak{sl}_2$ a  Hom-Lie algebra were given  in \cite{makhlouf2006hom}. Then   ${\frak g}$ defined, with respect to a basis $\left\{x_1,x_2,x_3\right\}$,  by
$$[x_1,x_2] =2x_2 , \ \ \ [x_1,x_3] =-2x_3 ,\ \ \ [x_2,x_3] = x_1,$$
and linear maps $\alpha$ defined, with respect to the previous basis,  by:
 $$ \left(
\begin{array}{ccc}
a \ & d & c \\
2c & b & f \\
2d & e & b \\
\end{array}
\right)$$
are Hom-Lie algebras.

In the case where the matrix is the identity matrix, one gets the classical Lie algebra $\mathfrak{sl}_2$. It is well known that the Lie algebra $\mathfrak{sl}_2$ provided with its Killing form ${\frak K}$ is a quadratic Lie algebra. Moreover, a straightforward computation shows that  $\alpha$ are ${\frak K}$-symmetric (i.e. ${\frak K}(\alpha(x),y)= {\frak K}(x,\alpha(y))$). Consequently,
 $({\frak g},[\cdot,\cdot ],\alpha,{\frak K})$  are quadratic Hom-Lie algebras.\\
\end{example}

\begin{example}\label{Hom-nilpotent}
Nilpotent Lie superalgebras up to dimension 5 are classified in \cite{hegazi1999classification}. Among these Lie superalgebras, we find a class of quadratic 2-nilpotent Lie superalgebras   ${\frak L}  =  {\frak  L}_{\overline{0}}\oplus {\frak L}_{\overline{1}}$ where ${\frak  L}_{\overline{0}}$ is generated by $  <l_{0}, k_0>$ and ${\frak L}_{\overline{1}}$ is generated by  $ <l_1 , k_1>$. The bracket  being defined as $\left[l_0 , l_1\right] = k_1$, $\left[l_1 , l_1\right] = k_0$.  The quadratic structures are given by bilinear forms $B_{p,q}$ which are defined with respect to  the basis $\left\{l_0, k_0, l_1, k_1\right\}$ by the matrices 
$$ \left(
\begin{array}{cccc}
p& -q & 0 & 0 \\
-q & 0 & 0 & 0 \\
0 & 0 & 0 & q \\
0 & 0 & -q & 0
\end{array}
\right)$$ where $p\in {\mathbb K} $ and $q \in {\mathbb K}\setminus \left\{0\right\}$. 

Now, let us consider the even linear map $\alpha:  {\frak L}\longrightarrow {\frak L}$  defined, with respect to the basis $\left\{l_0,k_0,l_1,k_1\right\}$, by the matrices:
$$\left(
\begin{array}{llll}
a & \ \ \ c & \ \ 0 & \ \ 0\\
b & a + \frac{p}{q}c &\ \  0 & \ \ 0\\
0 &\ \  \ 0 & \ \ d & \ \ 0\\
0 & \ \ \ 0 & \ \ 0 & \ \ d
\end{array}
\right).$$
Direct  calculations show that super-Hom-Jacobi identity is satisfied and  
\begin{eqnarray*}
B(\alpha(l_0),k_0) &=& B(a l_0 + b k_0, k_0)= -aq = cp -q(a + \frac{p}{q}c)= B(l_0, cl_0 + (a + \frac{p}{q}c)k_0) = B(l_0, \alpha(k_0)),
\end{eqnarray*}
\begin{eqnarray*}
B(\alpha(l_1),l_1)=B(l_1, \alpha(l_1))= d\ B(l_1,l_1)= 0 = d\ B(k_1,k_1)= B(\alpha(k_1),k_1)=B(k_1, \alpha(k_1)),
\end{eqnarray*}
\begin{eqnarray*}
B(\alpha(l_1),k_1)= B(dl_1,k_1) = dq = B(l_1, dk_1) = B(l_1, \alpha(k_1)).
\end{eqnarray*}
So, we deduce that $({\frak L},[\cdot , \cdot ], \alpha, B)$ are quadratic Hom-Lie superalgebras.\\
\end{example}
\begin{example}
The two-dimensional superalgebra $A=A_{\bar{1}}$ is a symmetric associative super-commutative superalgebra. Applying  Theorem \ref{tensor-product}, where $\Gamma={\mathbb Z}_2$, to previous examples\\

$\bullet$ Let  $({\mathfrak g},[\cdot ,\cdot ], \alpha, {\frak K})$ be a quadratic Lie superalgebra defined  in Example \ref{Hom-sl2}. then   ${\mathfrak g}\otimes A$ is a quadratic Hom-Lie superalgebra where the twist map is given by $\alpha\otimes id$.\\

$\bullet$  Let  $({\frak L},[\cdot ,\cdot ],\alpha, B)$ be a quadratic Lie superalgebra defined in Example \ref{Hom-nilpotent}. then  ${\frak L}\otimes A$ is a quadratic Hom-Lie superalgebra where the twist map is given by $\alpha\otimes id$.\\
\end{example}

\begin{example}
Let us consider the class of symmetric associative super-commutative superalgebra defined by $A= A_{\bar{0}}\oplus A_{\bar{1}}$, where $dim A_{\bar{0}}= dim A_{\bar{1}}=2$, such that $A_{\bar{0}}. A_{\bar{1}}=\left\{0\right\},\ \  A_{\bar{1}}.A_{\bar{1}}=\left\{0\right\}$
and if we assume that $A_{\bar{0}}={\mathbb K}e_0\oplus {\mathbb K}f_0$, then we have $e_0\in Ann(A_{\bar{0}})$ and $f_0\cdot f_0= a e_0$, where $a \neq 0$.
In addition, the symmetric structures are  expressed, with respect to   the basis $\left\{e_0, f_0, e_1, f_1\right\}$,  by the following matrices
$$ \left(
\begin{array}{cccc}
0 & \alpha & 0 & 0 \\
\alpha & \beta & 0 & 0 \\
0 & 0 & 0 & \gamma \\
0 & 0 & -\gamma & 0
\end{array}\right)$$ where $\alpha, \gamma \in {\mathbb K}\setminus \left\{0\right\}$ and $\beta\in {\mathbb K}$.\\

 Theorem \ref{tensor-product}, where $\Gamma={\mathbb Z}_2$, leads to ${\mathfrak g}\otimes A$ (resp. ${\frak L}\otimes A$), where $\g$ is defined in Example \ref{Hom-sl2}  (resp. ${\frak L}$ defined in Example \ref{Hom-nilpotent}) is a quadratic Hom-Lie superalgebra where the twist map is given by $\alpha\otimes id$.

\end{example}

\begin{example}
Let $A$ be an $n$-dimensional  vector space  and $\wedge A $ be a Grassmann algebra of $A$. We know that $\wedge A$ is an super-commutative associative  superalgebra with
$${(\wedge A)}_{\bar{0}}= {\bigoplus}_{i\in {\mathbb Z}} {\wedge}^{2i} A \ \ \ \mbox{and}\ \ \ {(\wedge A)}_{\bar{1}}= {\bigoplus}_{i\in {\mathbb Z}} {\wedge}^{2i+1} A.$$
 Moreover, it has been proved in \cite{benayadi2003socle} that $\wedge A$ is a symmetric superalgebra if and only if the dimension $n$ of $A$ is even. Here we consider  $\wedge A$ where $A$ is a vector space with even dimension. We denote by $\theta$ an associative symmetric structure on $\wedge A$.\\
Similarly Theorem \ref{tensor-product}, where $\Gamma={\mathbb Z}_2$, leads to ${\mathfrak g}\otimes A$ (resp. ${\frak L}\otimes A$), where $\g$ is defined in Example \ref{Hom-sl2}  (resp. ${\frak L}$ defined in Example \ref{Hom-nilpotent}) is a quadratic Hom-Lie superalgebra where the twist map is given by $\alpha\otimes id$.

\end{example}


\section{Representation and some extensions of color Hom-Lie algebras}

\subsection{Representation of color Hom-Lie algebras and semi-direct product}
In this section we extend representation theory of Hom-Lie  algebras introduced in \cite{sheng2010representations} and \cite{benayadi2010hom} to  color Hom-Lie case.

\begin{definition}
Let $(\g, {\left[\cdot,\cdot\right]}_{\g}, \alpha, \varepsilon)$ be a color Hom-Lie algebra and  $(M,\beta)$ be a  pair consisting of   $\Gamma$-graded vector space $M$ and an even homomorphism  $\beta :M\rightarrow M$. The pair $(M,\beta)$ is said to be a  \emph{Hom-module} over $\g$ (or $\g$-Hom-module) if there exists a color skew-symmetric bilinear map $\left[\cdot,\cdot\right]:\g\times M \longrightarrow M$, that is $\left[x,m\right] = -\varepsilon(x,m)\left[m,x\right]$, $\forall x\in \g$ and $\forall m\in M$, such that
$$\varepsilon(m,x)\big[\alpha(x), \left[y,m\right]\big]+ \varepsilon(m,x)\big[\alpha(y), \left[m,x\right]\big] + \varepsilon(y,m)\left[\beta(m), {\left[x,y\right]}_{\g}\right] =0, \ \ \ \forall\ x,y\in \g, \forall m\in M.$$
\end{definition}

\begin{definition}
Let $(\g,[\cdot,\cdot],\alpha, \varepsilon)$ be a color Hom-Lie algebra and $(M,\beta)$ be a pair consisting  of a $\Gamma$-graded vector space $M$ and an even homomorphism  $\beta: M\rightarrow M$. A \emph{representation} of $\g$ on $(M,\beta)$  is an even linear map $\rho:\g \rightarrow \mathfrak{gl}(M)$ satisfying
\begin{equation}\label{RepreIdent}
\rho([x,y])\circ\beta=\rho(\alpha(x))\circ\rho(y)-\varepsilon(x,y)\rho(\alpha(y))\circ\rho(x).\\
\end{equation}
\end{definition}

\begin{definition}
A representation  $\rho$ of a multiplicative color Hom-Lie algebra $(\g, \alpha, \varepsilon)$ on a $\Gamma$-graded vector space $(M,\beta)$ is a representation of color Hom-Lie algebra (i.e. a linear map satisfying (\ref{RepreIdent})) such that the  following condition holds:
\begin{eqnarray}\label{condition-two-of-multiplicative-representation}
\beta \big( \rho(x)(m)\big) = \rho(\alpha(x))(\beta(m)), \ \ \ \forall \ m\in M\ \ \mbox{and}\ \forall x\in \ \g.
\end{eqnarray}
\end{definition} 

Now, let $(\g,[\cdot,\cdot],\alpha, \varepsilon)$ be a color Hom-Lie algebra and $(M,\beta)$ be a pair  of  a $\Gamma$-graded vector space $M$ and an even homomorphism  $\beta$. If  $\rho$ is a representation of $\g$ on $(M,\beta)$, then we can see easily that $M$ is a $\g$-module via $\left[x,m\right]= \rho(x)(m)$, $\forall x\in \g$ and $\forall m\in M$. Conversely, if $M$ is a $\g$-Hom-module, then the linear map $\rho:\g \longrightarrow \mathfrak{gl}(M)$ defined by $\rho(x)(m)= \left[x,m\right]$, $\forall x\in \g$ and $\forall m\in M$, is a representation of $\g$ on $(M,\beta )$.\\

\medskip

Two representations $\rho$ and $\rho'$ of $\g$ on $(M,\beta)$ and $(M',\beta')$ respectively are \emph{equivalent} if there exists  an even isomorphism of vector spaces $f:M\rightarrow M'$ such that $f\circ \beta = \beta'\circ f$ and $ f\circ \rho(x)= \rho'(x)\circ f$, $\forall x\in \g$.\\

\begin{example}
Let $(\g,[\cdot,\cdot],\alpha, \varepsilon)$ be a color Hom-Lie  algebra. The even linear map
$$ad: \g \longrightarrow \mathfrak{gl}(\g)\ \ \ \mbox{defined by}\ \ \ \  ad(x)(y):= \left[x,y\right], \ \ \forall x,y\in \g$$ is a representation of $\g$ on $(\g, \alpha)$. This representation is called  adjoint representation.\\
\end{example}

\begin{proposition}
Let $(\g,[\cdot,\cdot],\alpha,\varepsilon)$ be a  color Hom-Lie  algebra and $\rho$ be a representation of $\g$ on $(M,\beta)$. Let us consider $M^*$ the dual space of $M$ and $\tilde{\beta}:M^* \longrightarrow M^*$ an even homomorphism defined by $\widetilde{\beta}(f) = f\circ \beta$, $\forall f\in M^*$. Then, the even linear map $\widetilde{\rho}:\g\rightarrow End(M^*)$ defined by  $\widetilde{\rho} := -^t\rho$, that is $\widetilde{\rho}(x)(f) = - \varepsilon(x,f) f\circ \rho(x)$, $\forall\, f\in M^*$ and $\forall x\in \g$, is a representation of $\g$ on $(M^*,\tilde{\beta})$ if and only if
\begin{equation}\label{dualrep}
\rho(x)\circ \rho(\alpha(y))-\varepsilon(x,y)\rho(y)\circ\rho(\alpha(x))=\beta\circ\rho([x,y]),\ \forall\ x,y\in\g.\\
\end{equation}

\end{proposition}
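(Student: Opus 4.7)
My plan is a direct verification on elements of $M^*$: expand the representation identity (\ref{RepreIdent}) for the candidate pair $(\widetilde{\rho},\tilde\beta)$, unwind both sides using the explicit formula $\widetilde{\rho}(x)(f)=-\varepsilon(x,f)\,f\circ\rho(x)$, and compare with (\ref{dualrep}). Since everything is linear in $f$ and the argument is an equivalence, the whole proof reduces to showing that two concrete endomorphisms of $M$ agree when composed from the right with every $f\in M^*$.

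First I would fix homogeneous $x,y\in\g$ and $f\in M^*$ and compute the left-hand side of the representation identity for $\widetilde{\rho}$:
\[
\widetilde{\rho}([x,y])\bigl(\tilde\beta(f)\bigr)=-\varepsilon(x+y,f)\,f\circ\beta\circ\rho([x,y]),
\]
using that $\tilde\beta(f)=f\circ\beta$ has the same degree as $f$ (because $\beta$ is even). Next I would compute the two summands of the right-hand side, taking care that $\alpha$ is even (so $\widetilde{\rho}(\alpha(x))$ carries $\rho(\alpha(x))$ and degree $x$). A careful bookkeeping gives
\[
\widetilde{\rho}(\alpha(x))\widetilde{\rho}(y)(f)=\varepsilon(y,f)\varepsilon(x,y+f)\,f\circ\rho(y)\circ\rho(\alpha(x)),
\]
\[
\widetilde{\rho}(\alpha(y))\widetilde{\rho}(x)(f)=\varepsilon(x,f)\varepsilon(y,x+f)\,f\circ\rho(x)\circ\rho(\alpha(y)),
\]
after which I would collapse the resulting $\varepsilon$-factors by repeated use of \eqref{condi1-bicharacter}--\eqref{condi3-bicharacter}. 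The key simplifications are $\varepsilon(x,y+f)=\varepsilon(x,y)\varepsilon(x,f)$, $\varepsilon(y,x+f)=\varepsilon(y,x)\varepsilon(y,f)$, together with $\varepsilon(x,y)\varepsilon(y,x)=1$, which allow the common scalar $-\varepsilon(x+y,f)$ to be factored out cleanly.

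After the cancellations, the representation identity for $\widetilde{\rho}$ becomes
\[
-\varepsilon(x+y,f)\,f\circ\beta\circ\rho([x,y])=-\varepsilon(x+y,f)\,f\circ\bigl(\rho(x)\rho(\alpha(y))-\varepsilon(x,y)\rho(y)\rho(\alpha(x))\bigr),
\]
i.e.\ $f\circ\beta\circ\rho([x,y])=f\circ\bigl(\rho(x)\rho(\alpha(y))-\varepsilon(x,y)\rho(y)\rho(\alpha(x))\bigr)$ for every $f\in M^*$. Since $M^*$ separates points of $M$, this equality for all $f$ is equivalent to the operator identity \eqref{dualrep}. Thus \eqref{dualrep} holds if and only if $\widetilde{\rho}$ verifies the color Hom-Lie representation axiom on $(M^*,\tilde\beta)$, proving both implications at once.

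The only real obstacle is sign accounting: every application of $\widetilde{\rho}$ produces one minus sign and one $\varepsilon$-phase depending on the degree of its argument, and the argument itself is already a composition whose degree must be read off correctly. I would therefore do the computation once on homogeneous $f$ and rely on bilinearity to extend to all of $M^*$; nothing deeper than \eqref{condi1-bicharacter}--\eqref{condi3-bicharacter} and the observation that $\alpha,\beta$ are even is needed.
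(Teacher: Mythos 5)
Your computation is correct and follows essentially the same route as the paper's proof: expand both sides of the representation identity for $\widetilde{\rho}$ on a homogeneous $f\in M^*$, simplify the bicharacter factors via \eqref{condi1-bicharacter}--\eqref{condi3-bicharacter} to extract the common scalar $-\varepsilon(x+y,f)$, and use that $M^*$ separates points to conclude equivalence with \eqref{dualrep}. Your explicit remarks that $\tilde\beta(f)$ keeps the degree of $f$ (since $\beta$ is even) and that the "for all $f$" quantifier is what yields the operator identity are the same points the paper relies on implicitly.
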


\begin{proof}
Let $f\in M^*$, $x,y\in\g$ and $m\in M$. We compute the right hand side of the identity \eqref{RepreIdent}, we have
\begin{align*}
& \big(\widetilde{\rho}(\alpha(x))\circ\widetilde{\rho}(y)-\varepsilon(x,y)\widetilde{\rho}(\alpha(y))\circ\widetilde{\rho}(x)\big)(f)(m)\\
& =-\varepsilon(x,y+f)\widetilde{\rho}(y)(f)\circ\rho(\alpha(x))(m)+\varepsilon(y,f)\widetilde{\rho}(x)(f)\circ\rho(\alpha(y))(m)\\
& = \varepsilon(y,f)\varepsilon(x,y+f)f(\rho(y)\circ\rho(\alpha(x))(m))-\varepsilon(x+y,f) f(\rho(x)\circ\rho(\alpha(y))(m))\\
& = -\varepsilon(x+y,f) f\big( - \varepsilon(x,y) \rho(y)\circ\rho(\alpha(x))(m) + \rho(x)\circ\rho(\alpha(y))(m)\big).\\
\end{align*}
On the other hand, we set that the twisted map $\widetilde{\beta}$ is $\widetilde{\beta}=^t\beta$, then the left hand side \eqref{RepreIdent}  writes
\begin{align*}
\widetilde{\rho}([x,y])\circ\widetilde{\beta}(f)(m)&=-\varepsilon(x+y,f)\widetilde{\beta}(f)\circ\rho([x,y])(m)\\
&=-\varepsilon(x+y,f) f \big(\beta\circ\rho([x,y])\big)(m).\\
\end{align*}
Thus \eqref{dualrep} is satisfied.
\end{proof}

\begin{cor}
Let $ad$ be the adjoint representation of a color Hom-Lie  algebra $(\g, [\cdot,\cdot], \alpha, \varepsilon)$ and let us consider the even linear map  $\pi:\g\rightarrow End(\g^*)$ defined by $,\ \pi(x)(f)(y)=- \varepsilon(x,f)(f\circ ad(x))(y)$, $\forall x,y\in \g$. Then $\pi$ is a representation of $\g$ on $(\g^*,\widetilde{\alpha})$  if and only if
 \begin{equation}\label{coadrep}
ad(x)\circ ad(\alpha(y))-\varepsilon(x,y)ad(y)\circ ad(\alpha(x))=\alpha\circ ad([x,y]),\ \forall\ x,y\in\g.\\
\end{equation}
We call the representation $\pi$ the \emph{coadjoint representation} of $\g$.\\
\end{cor}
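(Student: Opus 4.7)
The plan is to recognize this corollary as the specialization of the preceding proposition to the case of the adjoint representation, so essentially no new computation is required. The preceding example established that $ad:\g\to\mathfrak{gl}(\g)$ is a representation of $(\g,[\cdot,\cdot],\alpha,\varepsilon)$ on the $\g$-Hom-module $(\g,\alpha)$, so the hypotheses of the previous proposition are satisfied with the choice $M=\g$, $\beta=\alpha$, $\rho=ad$. The plan is simply to track through the identifications and check that the statement of the corollary is a verbatim instance of the proposition.

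First I would unpack the definitions: under the identification above we have $M^{*}=\g^{*}$, and the even homomorphism $\widetilde{\beta}:M^{*}\to M^{*}$, $f\mapsto f\circ\beta$, becomes exactly $\widetilde{\alpha}:\g^{*}\to\g^{*}$. The dual linear map $\widetilde{\rho}:\g\to End(M^{*})$ defined in the previous proposition by $\widetilde{\rho}(x)(f)=-\varepsilon(x,f)\,f\circ\rho(x)$ then reads
\begin{equation*}
\widetilde{ad}(x)(f)(y)=-\varepsilon(x,f)\,(f\circ ad(x))(y)=\pi(x)(f)(y),
\end{equation*}
so $\widetilde{ad}=\pi$ as even linear maps $\g\to End(\g^{*})$.

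Next I would invoke the previous proposition: it asserts that $\widetilde{\rho}$ is a representation of $\g$ on $(M^{*},\widetilde{\beta})$ if and only if $\rho$ satisfies the identity \eqref{dualrep}. Substituting $\rho=ad$ and $\beta=\alpha$ into \eqref{dualrep} produces precisely \eqref{coadrep}, namely
\begin{equation*}
ad(x)\circ ad(\alpha(y))-\varepsilon(x,y)ad(y)\circ ad(\alpha(x))=\alpha\circ ad([x,y]),\quad\forall\,x,y\in\g.
\end{equation*}
Combining the two observations, $\pi$ is a representation of $\g$ on $(\g^{*},\widetilde{\alpha})$ if and only if \eqref{coadrep} holds, which is the desired equivalence.

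There is essentially no obstacle here beyond bookkeeping: the only thing worth double-checking is the parity/sign of $\varepsilon(x,f)$ in the specialization (since $\widetilde{\alpha}(f)=f\circ\alpha$ has the same degree as $f$, the bicharacter factor is inherited directly from the general statement), and that the condition \eqref{dualrep} translates without modification because $\beta=\alpha$ is the twist map of $\g$ itself. Hence the corollary follows as an immediate application of the proposition, with no further calculation required.
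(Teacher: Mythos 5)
Your proposal is correct and matches the paper's intent exactly: the corollary is stated without proof precisely because it is the specialization of the preceding proposition to $\rho=ad$, $M=\g$, $\beta=\alpha$, under which $\widetilde{\rho}=\pi$, $\widetilde{\beta}=\widetilde{\alpha}$, and condition \eqref{dualrep} becomes \eqref{coadrep}. Your bookkeeping of the identifications is accurate and nothing further is needed.
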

\begin{proposition}
Let $(M,\rho,\gamma)$ be a representation of color Hom-Lie  algebra $(\g,[\cdot,\cdot]_\g,\alpha)$, then the space $\g\oplus M$ is a $\Gamma$-graded color Hom-Lie  algebra where  $(\g\oplus M)_a=\g_a\oplus M_a$, with structure $([\cdot,\cdot],\eta)$ given, for  $x,y\in\g$ and $u,v\in M$ by
\begin{align}
&[x+u,y+v]=[x,y]_\g+\rho(x)(v)-\varepsilon(x,y)\rho(y)(u),\\
&\eta(x+u)=\alpha(x)+\gamma(u),
\end{align}
we call the direct sum $\g\oplus M$ semidirect product  of $\g$ and $M$. It is denoted by $\g\ltimes M$.
\end{proposition}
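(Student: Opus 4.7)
The plan is to verify the three defining conditions of a color Hom-Lie algebra on $\g\oplus M$: that the bracket and $\eta$ respect the $\Gamma$-grading, that the bracket is $\varepsilon$-skew-symmetric, and that the $\varepsilon$-Hom-Jacobi identity holds. Homogeneity is immediate from the evenness of $[\cdot,\cdot]_\g$, $\alpha$, $\gamma$, together with the fact that $\rho(x)(v)\in M_{|x|+|v|}$ since $\rho$ is an even map into $\mathfrak{gl}(M)$. For $\varepsilon$-skew-symmetry, a direct check on homogeneous $X=x+u$, $Y=y+v$ reduces the identity $[X,Y]=-\varepsilon(X,Y)[Y,X]$ to the $\varepsilon$-skew-symmetry of $[\cdot,\cdot]_\g$ combined with the bicharacter relation $\varepsilon(x,y)\varepsilon(y,x)=1$.

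The core of the argument is the $\varepsilon$-Hom-Jacobi identity. For homogeneous triples $X=x+u$, $Y=y+v$, $Z=z+w$ I would form
\[
S:=\circlearrowleft_{X,Y,Z}\varepsilon(z,x)\bigl[\eta(X),[Y,Z]\bigr]
\]
and split each summand into its $\g$-component and its $M$-component. The $\g$-component of $S$ is exactly the cyclic sum $\circlearrowleft_{x,y,z}\varepsilon(z,x)[\alpha(x),[y,z]_\g]_\g$, which vanishes by the $\varepsilon$-Hom-Jacobi identity in $\g$. For the $M$-component, expanding $[\eta(X),[Y,Z]]$ via the definition of the bracket produces, besides the $\g$-part, the three summands
\[
\rho(\alpha(x))\rho(y)(w)-\varepsilon(y,z)\rho(\alpha(x))\rho(z)(v)-\varepsilon(x,y+z)\rho([y,z]_\g)(\gamma(u)),
\]
and analogously for the two cyclic shifts.

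I would then regroup the nine resulting $M$-terms according to which of $u$, $v$, $w$ each of them acts on; by the cyclic symmetry of the construction it suffices to treat the $u$-group. After simplifying scalar prefactors using $\varepsilon(a,b+c)=\varepsilon(a,b)\varepsilon(a,c)$ and $\varepsilon(a,b)\varepsilon(b,a)=1$, the three $u$-contributions combine into
\[
\varepsilon(x,y)\Bigl(\rho(\alpha(y))\rho(z)(u)-\varepsilon(y,z)\rho(\alpha(z))\rho(y)(u)-\rho([y,z]_\g)\gamma(u)\Bigr),
\]
which vanishes precisely by the representation identity \eqref{RepreIdent}. The $v$- and $w$-groups cancel by the same mechanism after cyclic relabeling, so $S=0$.

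The main obstacle will be the bookkeeping of $\varepsilon$-factors. One has to convert scalars such as $\varepsilon(z,x)\varepsilon(x,y+z)$ and $\varepsilon(y,z)\varepsilon(x,y)$ into the exact coefficient $\varepsilon(x,y)$ needed to align the three $u$-terms with the left- and right-hand sides of \eqref{RepreIdent}, using the degree conventions $|u|=|x|$, $|v|=|y|$, $|w|=|z|$ forced by $(\g\oplus M)_a=\g_a\oplus M_a$. Once that accounting is done, the identity reduces transparently to the representation axiom.
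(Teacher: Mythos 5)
Your proposal is correct and follows essentially the same route as the paper: expand each of the three cyclic terms of $\circlearrowleft\varepsilon(z,x)[\eta(X),[Y,Z]]$, let the $\g$-components cancel by the $\varepsilon$-Hom-Jacobi identity in $\g$, and group the $M$-components by which of $u,v,w$ they act on so that each group vanishes by the representation identity \eqref{RepreIdent}. Your computed $u$-group, $\varepsilon(x,y)\bigl(\rho(\alpha(y))\rho(z)(u)-\varepsilon(y,z)\rho(\alpha(z))\rho(y)(u)-\rho([y,z]_\g)\gamma(u)\bigr)$, matches the coefficients appearing in the paper's expansion.
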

\begin{proof}
Observe that, for any three homogeneous elements $x+u,y+v$ and $z+w$ of $\g\oplus M$
\begin{align*}
\varepsilon(z,x)[\eta(x+u),[y+v,z+w]]&=\varepsilon(z,x)[\alpha(x)+\gamma(u),[y,z]_\g+\rho(y)(w)-\varepsilon(y,z)\rho(z)(v)]\\
&=\varepsilon(z,x)[\alpha(x),[y,z]_\g]_\g +\rho(\alpha(x))\circ\rho(y)(w)\\
&-\varepsilon(z,x)\varepsilon(y,z)\rho(\alpha(x))\circ\rho(z)(v)-\varepsilon(x,y)\rho([y,z]_\g)(\gamma(u)).
\end{align*}
And  similarly
\begin{align*}
\varepsilon(x,y)[\eta(y+v),[z+w,x+u]]&=\varepsilon(x,y)[\alpha(y),[z,x]_\g]_\g +\rho(\alpha(y))\circ\rho(z)(u)\\
&-\varepsilon(x,y)\varepsilon(z,x)\rho(\alpha(y))\circ\rho(x)(w)-\varepsilon(y,z)\rho([z,x]_\g)(\gamma(v)).
\end{align*}\begin{align*}
\varepsilon(y,z)[\eta(z+w),[x+u,y+v]]&=\varepsilon(y,z)[\alpha(z),[x,y]_\g]_\g +\rho(\alpha(z))\circ\rho(x)(v)\\
&-\varepsilon(y,z)\varepsilon(x,y)\rho(\alpha(z))\circ\rho(y)(u)-\varepsilon(z,x)\rho([x,y]_\g)(\gamma(w)).
\end{align*}
Therefore the identities \eqref{JacobiId1}, \eqref{RepreIdent} induce that $(\g\oplus M,[\cdot,\cdot],\eta)$ is a color Hom-Lie algebra.
\end{proof}

\section{Extensions}

In this section, we generalize to the case of color Hom-Lie algebras the notion of central extension and $T^*$-extension introduced in \cite{wang2010non} for color Lie algebras. These two types of extensions require some elements of cohomology of color Hom-Lie algebras. 

\subsection{Elements of cohomology}
The cohomology of color Lie algebras was introduced in \cite{scheunert1998cohomology}. In the following, we define  the first and the second cohomology groups of color Hom-Lie algebras, with values in a $\g$-Hom-module $(M,\beta )$.\\ 

Let $(\g,[\cdot,\cdot],\alpha,\varepsilon)$ be a color Hom-Lie algebra and $(M,\beta)$ be a $\Gamma$-graded $\g$-Hom-module. 
we set  
\begin{eqnarray*}
\mathscr{C}^n(\g,M) &=& \left\{\varphi: {\wedge}^n\g \longrightarrow M, \ \ \ \mbox{such that}\ \ \ \varphi\circ \alpha^{\otimes n}=\beta\circ \varphi\right\}, \quad \mbox{for } n\geq 1.\\
\mathscr{C}^n(\g,M) &=& \left\{0\right\}, \quad \mbox{for}\ n\leq -1,\\
\mathscr{C}^0(\g,M) &=& M.
\end{eqnarray*}
A homogeneous element $\varphi$ in $\mathscr{C}^n(g,M)$ is called an $n$-\emph{cochain}. Next, we define the  coboundary operators ${\delta}^n: \mathscr{C}^n(\g,M) \longrightarrow \mathscr{C}^{n+1}(\g,M)$ for $n=1,2$ by 
 \begin{eqnarray*}
{\delta}^1: \mathscr{C}^1(\g, M) &\longrightarrow &  \mathscr{C}^2(\g, M)\\
\varphi & \longmapsto & {\delta}^1(\varphi)
\end{eqnarray*}  

such that, for  homogeneous elements $x_0,x_1\in \g$,
\begin{eqnarray*}  \delta^1\varphi(x_0,x_1)=\varepsilon(\varphi,x_0)\rho(x_0)(\varphi(x_1)))-\varepsilon(\varphi+x_0,x_1)\rho(x_1)(\varphi(x_0))-\varphi([x_0,x_1]).
\end{eqnarray*} 
and
\begin{eqnarray*}
{\delta}^2: \mathscr{C}^2(\g, M) &\longrightarrow &  \mathscr{C}^3(\g, M)\\
\varphi & \longmapsto & {\delta}^2(\varphi)
\end{eqnarray*}  

such that, for  homogeneous elements $x_0,x_1,x_2\in \g$,
\begin{align*}
\delta^2\varphi(x_0,x_1,x_2)&=\varepsilon(\varphi,x_0)\rho(\alpha(x_0))(\varphi(x_1,x_2)))-\varepsilon(\varphi+x_0,x_1)\rho(\alpha(x_1))(\varphi(x_0,x_2))\\
&+\varepsilon(\varphi+x_0+x_1,x_2)\rho(\alpha(x_2))(\varphi(x_0,x_1))-\varphi([x_0,x_1],\alpha(x_2))\nonumber\\
&+\varepsilon(x_1,x_2)\varphi([x_0,x_2],\alpha(x_1))+\varphi(\alpha(x_0),[x_1,x_2])\nonumber.\\
\end{align*}

\begin{proposition}\label{delta2-circ-delta1-equal-zero}
Let $(\g, [\cdot, \cdot],\alpha,\varepsilon)$ be a color Hom-Lie algebra and ${\delta}^n: \mathscr{C}^n(\g,M) \longrightarrow \mathscr{C}^{n+1}(\g,M)$ be the coboundary operator defined  above. Then, the composite  $\delta^2\circ\delta^1=0$.\\
\end{proposition}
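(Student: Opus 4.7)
The plan is a direct expansion: I substitute $\psi = \delta^1\varphi$ into the formula for $\delta^2\psi(x_0, x_1, x_2)$ and verify that the resulting eighteen summands cancel. Each of the six terms defining $\delta^2\psi$ splits into three via the three-term formula of $\delta^1\varphi$, and the resulting expressions organize into three structural families.

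First, a quadratic operator family of terms of the form $\rho(\alpha(x_i))\rho(x_j)(\varphi(x_k))$, produced when one of the first three summands of $\delta^2\psi$ wraps around one of the first two $\rho$-summands of $\delta^1\varphi$. Second, a mixed family of two types: terms $\rho(\alpha(x_i))(\varphi([x_j, x_k]))$ produced when the tail $-\varphi([\cdot,\cdot])$ of $\delta^1\varphi$ sits inside one of the first three summands of $\delta^2\psi$; and terms $\rho([x_i, x_j])(\varphi(\alpha(x_k)))$ produced when the first two $\rho$-summands of $\delta^1\varphi$ are evaluated on the bracket arguments of the last three summands of $\delta^2\psi$. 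On these latter terms I invoke the cochain condition $\varphi\circ\alpha = \beta\circ\varphi$ to rewrite $\varphi(\alpha(x_k)) = \beta(\varphi(x_k))$, so that they take the form $\rho([x_i, x_j])(\beta(\varphi(x_k)))$. Third, a nested-bracket family of terms $\varphi([[x_i, x_j], \alpha(x_k)])$ and $\varphi([\alpha(x_i), [x_j, x_k]])$ coming from the tail $-\varphi([\cdot,\cdot])$ of $\delta^1\varphi$ applied to the bracket arguments of the last three summands of $\delta^2\psi$.

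I then apply the representation identity \eqref{RepreIdent} in the form
$$\rho([x_i, x_j])\circ\beta \;=\; \rho(\alpha(x_i))\circ\rho(x_j) - \varepsilon(x_i, x_j)\,\rho(\alpha(x_j))\circ\rho(x_i),$$
which converts every $\rho([\cdot,\cdot])\circ\beta$ occurrence in the second family into a pair of quadratic operator terms, absorbing them into the first family. After this substitution, the quadratic $\rho$-terms cancel pairwise by direct coefficient comparison, using the bicharacter properties \eqref{condi1-bicharacter}--\eqref{condi3-bicharacter} and the $\varepsilon$-skew-symmetry \eqref{sksymmetric} of the bracket. The residual $\rho(\alpha(x_i))(\varphi([x_j, x_k]))$ contributions also cancel in pairs by the same mechanism, since they enter the expansion with sign patterns mirroring those of $\delta^2$. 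Finally, the nested-bracket family vanishes because it is precisely $\varphi$ applied to the $\varepsilon$-Hom-Jacobi expression written in the form \eqref{JacobiId2}.

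The main obstacle will be the bookkeeping of bicharacter coefficients: each rearrangement of terms, each commutation of a scalar past an operator, and each use of \eqref{sksymmetric} to reorder a bracket introduces a further $\varepsilon$-factor, and one must verify that the totals on both sides of each pair of cancelling summands match exactly via repeated applications of \eqref{condi1-bicharacter}--\eqref{condi3-bicharacter}. No new conceptual ingredient appears beyond the $\varepsilon$-Hom-Jacobi identity and the representation axiom; the entire content of the proof lies in the $\varepsilon$-arithmetic of the eighteen coefficients.
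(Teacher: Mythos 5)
The paper states this proposition without proof, so there is nothing to compare against; your direct-expansion strategy is the natural one and it is correct. I checked the bookkeeping: the twelve quadratic terms (six from the first three summands of $\delta^2$ acting on the $\rho$-parts of $\delta^1\varphi$, six more from converting $\rho([x_i,x_j])\circ\beta$ via \eqref{RepreIdent} after rewriting $\varphi(\alpha(x_k))=\beta(\varphi(x_k))$) cancel in six pairs, the six mixed terms $\rho(\alpha(x_i))(\varphi([x_j,x_k]))$ cancel in three pairs, and the three nested-bracket terms sum to $\varphi$ of $[[x_0,x_1],\alpha(x_2)]-\varepsilon(x_1,x_2)[[x_0,x_2],\alpha(x_1)]-[\alpha(x_0),[x_1,x_2]]$, which vanishes by \eqref{JacobiId2} together with \eqref{sksymmetric}.
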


\begin{remark}
In order to define a cohomology complex, we may set $({\delta}^0m)(x)=m\cdot x$, $\forall x\in \g$ and $\forall m\in M$, and ${\delta}^n = 0$, $\forall n \geq 3$. Deeply constructions will be given in a forthcoming paper.\\
\end{remark}

We denote the kernel of ${\delta}^n$ by $\mathscr{Z}^n(\g, M)$, whose  elements  are called $n$-\emph{cocycles},  and the image of ${\delta}^{n-1}$ by $\mathscr{B}^n(\g, M)$, whose elements are called $n$-\emph{coboundaries}. Moreover $\mathscr{Z}^n(\g, M)$ and $\mathscr{B}^n(\g, M)$ are two graded submodules of $\mathscr{C}^n(\g, M)$. Following  Proposition \ref{delta2-circ-delta1-equal-zero} we have  $$\mathscr{B}^n(\g, M)\subset \mathscr{Z}^n(\g, M).$$

Consequently, we construct the so-called \emph{cohomology groups} $$\mathscr{H}^n(\g, M)= \mathscr{Z}^n(\g,M)/\mathscr{B}^n(\g,M).$$
 Two elements of $\mathscr{Z}^n(\g,M)$ are said to \emph{cohomologous} if their residue classes modulo $\mathscr{B}^n(\g, M)$ coincide, that is if their difference lies in $\mathscr{B}^n(\g,M)$.\\ 



\subsection{Central extensions}

Let $({\g}_i, {[\cdot,\cdot]}_i, {\alpha}_i, \varepsilon)$, $i=1,2$, be two color Hom-Lie algebras. A color Hom-Lie algebra $\g$ is an extension of ${\g}_2$ by ${\g}_1$ if there exists an exact sequence 
$$0 \longrightarrow {\g}_1 \stackrel{{\mu}_1}{\longrightarrow} \g \stackrel{{\mu}_2}{\longrightarrow} {\g}_2 \longrightarrow 0$$ 
such that ${\mu}_1$ and ${\mu}_2$ are two morphisms of color Hom-Lie algebras. The kernel of ${\mu}_2$ is said to be the kernel of the extension. Two extensions $\g$ and $\g'$ of ${\g}_2$ by ${\g}_1$ are said to be equivalent if there exists  a morphism of color Hom-Lie algebras $f : \g \longrightarrow \g'$ such that the following diagram commutes.

$$\begin{tabular}{ccccccc}
$0 \longrightarrow$ & ${\g}_1$ & $\stackrel{{\mu}_1}{\longrightarrow}$ & $\g$ & $\stackrel{{\mu}_2}{\longrightarrow}$ & ${\g}_2$ & $\longrightarrow 0$\\
                  & $\downarrow$ &                                  & $\downarrow$ &                            & $\downarrow$ &    \\
$0 \longrightarrow$ & ${\g}_1$ & $\stackrel{{\mu'}_1}{\longrightarrow}$ & $\g'$ & $\stackrel{{\mu'}_2}{\longrightarrow}$ & ${\g}_2$ & $\longrightarrow 0$\\
\end{tabular}$$

\begin{definition}
An extension $\g$ of ${\g}_2$ by ${\g}_1$ is said to be central if the kernel of the extension lies in the center of $\g$.\\
\end{definition}

The following theorem gives a characterization of the bracket of the central extension of a color Hom-Lie algebra $\g$ and $\Gamma$-graded vector space $M$.\\

\begin{thm}
Let $(\g, {[\cdot,\cdot]}_{\g}, {\alpha}_{\g}, \varepsilon)$ be a color Hom-Lie algebra, $M$ be a $\Gamma$-graded vector space and $\Psi:\g\times \g \longrightarrow M$ be an even bilinear map. Then, the $\Gamma$-graded vector space $\g\oplus M$, where ${(\g\oplus M)}_{\gamma}= {\g}_{\gamma}\oplus M_{\gamma}$ for $\gamma\in \Gamma$, provided with the following bracket and   even linear map defined respectively by     
\begin{eqnarray}
\label{product-central-extension}[x + m, y + n] &=& {[x,y]}_{\g} + \Psi(x,y),\\
\label{twisting-central-extension}\alpha(x + m) &=& {\alpha}_{\g}(x) + m, \ \ \ \ \ \forall\ x,y\in \g, \ \ m,n\in M,
\end{eqnarray}
is a color Hom-Lie algebra central extension of $\g$ by $M$ if and only if$$\circlearrowleft_{x,y,z}\varepsilon(z,x)\Psi({\alpha}_{\g}(x), {[y,z]}_{\g})=0.$$  
In particular, if $\g$ is multiplicative and $\Psi$ satisfies $\Psi(\alpha(x),\alpha(y))= \Psi(x,y)$, $\forall x,y\in \g$, then the color Hom-Lie algebra $\g\oplus M$ is also multiplicative.\\
\end{thm}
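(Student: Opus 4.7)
The plan is to verify three things in sequence: (i) the pair $([\cdot,\cdot],\alpha)$ on $\g\oplus M$ defines a color Hom-Lie algebra structure precisely when the stated cyclic identity on $\Psi$ holds; (ii) the natural short exact sequence with the inclusion $\mu_1 : m \mapsto 0+m$ and projection $\mu_2 : x+m \mapsto x$ is a central extension in the sense already introduced; and (iii) the multiplicativity claim reduces to the invariance condition on $\Psi$. I would give $M$ itself the trivial bracket and identity twist, making it an abelian color Hom-Lie algebra with respect to which the exact sequence can be formulated. Note that the bracket's $\varepsilon$-skew-symmetry forces $\Psi$ to be $\varepsilon$-skew-symmetric, which is automatic if one regards $\Psi$ as a $2$-cochain on $\wedge^2\g$ in the sense of the preceding cohomology subsection.

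The heart of the matter is step (i). For homogeneous $x,y,z\in\g$ and $m,n,p\in M$, I would compute from the defining formulas \eqref{product-central-extension}--\eqref{twisting-central-extension} that
$$[\alpha(x+m),[y+n,z+p]]=[\alpha_\g(x),[y,z]_\g]_\g+\Psi(\alpha_\g(x),[y,z]_\g),$$
because the $M$-valued summand $\Psi(y,z)$ lies in the annihilator of $\g\oplus M$ by construction. Summing this expression cyclically over $(x,y,z)$ with weights $\varepsilon(z,x)$, the $\g$-components cancel by the $\varepsilon$-Hom-Jacobi identity of $\g$, and the residual $M$-components produce exactly $\circlearrowleft_{x,y,z}\varepsilon(z,x)\Psi(\alpha_\g(x),[y,z]_\g)$. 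Hence the Hom-Jacobi identity on $\g\oplus M$ holds iff the condition in the statement does.

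For (ii), centrality is immediate: since $[x+m,y+n]=[x,y]_\g+\Psi(x,y)$ depends only on $x$ and $y$, we have $[\mu_1(M),\g\oplus M]=0$, so $\mu_1(M)$ lies in the center. The map $\mu_1$ is injective, $\mu_2$ is surjective, $\ker\mu_2=\mu_1(M)$, and both commute with the respective twist maps because $\alpha$ restricts to the identity on $M$ and projects to $\alpha_\g$ on $\g$; so $\mu_1$ and $\mu_2$ are morphisms of color Hom-Lie algebras and the sequence is exact.

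For (iii), a direct comparison of
$$\alpha([x+m,y+n])=\alpha_\g([x,y]_\g)+\Psi(x,y)$$
with
$$[\alpha(x+m),\alpha(y+n)]=[\alpha_\g(x),\alpha_\g(y)]_\g+\Psi(\alpha_\g(x),\alpha_\g(y))$$
shows, using multiplicativity of $\alpha_\g$, that the two expressions agree exactly under the assumption $\Psi(\alpha_\g(x),\alpha_\g(y))=\Psi(x,y)$. The only real obstacle in the whole argument is bookkeeping the $\varepsilon$-signs correctly in the cyclic sum of step (i); every other verification is a direct unwinding of the definitions.
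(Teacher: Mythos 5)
Your proof is correct and follows essentially the same route as the paper: the paper's own argument consists precisely of your step (i), expanding $\varepsilon(z,x)[\alpha(x)+l,[y+n,z+p]]$ into its $\g$-component (which vanishes cyclically by the Hom-Jacobi identity of $\g$) and its $M$-component $\varepsilon(z,x)\Psi(\alpha_\g(x),[y,z]_\g)$. Your additional verifications of centrality, exactness, and the multiplicativity claim, as well as your remark that $\varepsilon$-skew-symmetry of the new bracket forces $\Psi$ to be $\varepsilon$-skew-symmetric, are correct details that the paper leaves implicit.
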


\begin{proof}
We have for homogeneous elements $x,y,z\in\g$ and $m,n,l\in  M$,
\begin{eqnarray*}
\varepsilon(z,x) \big[\alpha_{\g}(x) + l, [y + m, z + n]\big] &=& \varepsilon(z,x) \big[{\alpha}_{\g}(x) + l, {[y,z]}_{\g} + \Psi(y,z)\big]\\
&=& \varepsilon(z,x) {\big[{\alpha}_{\g}(x), {[y,z]}_{\g}\big]}_{\g} + \varepsilon(z,x)\Psi({\alpha}_{\g}(x), {[y,z]}_{\g}).
\end{eqnarray*}
Consequently, 
$$\circlearrowleft_{x,y,z}\varepsilon(z,x) \big[\alpha_{\g}(x) + l, [y + m, z + n]\big] = 0$$
if and only if
$$\circlearrowleft_{x,y,z}\varepsilon(z,x)\Psi({\alpha}_{\g}(x), {[y,z]}_{\g})=0.$$
\end{proof}

\begin{cor}
Let $(\g, {[\cdot,\cdot]}_{\g}, {\alpha}_{\g}, \varepsilon)$ be a multiplicative color Hom-Lie algebra. Then, the $\Gamma$-graded vector space $\g\oplus \mathbb K$ provided with the product (\ref{product-central-extension}) and the map (\ref{twisting-central-extension}) is a multiplicative color Hom-Lie algebra if and only if $\Psi\in \mathscr{Z}^2(\g, \mathbb K)$.\\    
\end{cor}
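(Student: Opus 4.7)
The plan is to apply the preceding theorem in the special case $M=\mathbb{K}$ equipped with the trivial $\g$-action $\rho=0$ and the trivial twist $\beta=\mathrm{id}_{\mathbb{K}}$, and then translate the two conditions produced there (the cyclic identity for $\Psi$ and the multiplicativity condition $\Psi\circ\alpha^{\otimes 2}=\Psi$) into the cocycle condition $\Psi\in\mathscr{Z}^2(\g,\mathbb{K})$.

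First I would observe that, by the preceding theorem applied with $M=\mathbb{K}$, the data $(\g\oplus\mathbb{K},[\cdot,\cdot],\alpha,\varepsilon)$ defined by \eqref{product-central-extension}–\eqref{twisting-central-extension} is a color Hom-Lie algebra if and only if
\begin{equation*}
\circlearrowleft_{x,y,z}\varepsilon(z,x)\,\Psi(\alpha_{\g}(x),[y,z]_{\g})=0,
\end{equation*}
and it is furthermore multiplicative precisely when $\Psi(\alpha_{\g}(x),\alpha_{\g}(y))=\Psi(x,y)$. Next I would unpack the meaning of $\Psi\in\mathscr{Z}^2(\g,\mathbb{K})$: the requirement $\Psi\in\mathscr{C}^2(\g,\mathbb{K})$, namely $\Psi\circ\alpha_{\g}^{\otimes 2}=\mathrm{id}_{\mathbb{K}}\circ\Psi$, is exactly the multiplicativity condition. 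So the whole task reduces to proving that $\delta^2\Psi=0$ is equivalent to the cyclic identity above.

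To do this, I would specialize the formula for $\delta^2$ to the trivial action, where the three $\rho$-terms disappear, leaving
\begin{equation*}
\delta^2\Psi(x,y,z)=-\Psi([x,y],\alpha_{\g}(z))+\varepsilon(y,z)\Psi([x,z],\alpha_{\g}(y))+\Psi(\alpha_{\g}(x),[y,z]).
\end{equation*}
Using the $\varepsilon$-skew-symmetry built into $\Psi\in\mathscr{C}^2(\g,\mathbb{K})=\mathrm{Hom}(\wedge^2\g,\mathbb{K})$ (so $\Psi(u,v)=-\varepsilon(u,v)\Psi(v,u)$) together with the bicharacter identities \eqref{condi1-bicharacter}–\eqref{condi3-bicharacter}, I would rewrite each term of $\delta^2\Psi(x,y,z)$ in the form $\Psi(\alpha_{\g}(\,\cdot\,),[\,\cdot\,,\,\cdot\,])$; multiplying through by $\varepsilon(z,x)$ and simplifying the resulting products of bicharacter values collapses $\varepsilon(z,x)\delta^2\Psi(x,y,z)$ to exactly the cyclic sum
\begin{equation*}
\varepsilon(z,x)\Psi(\alpha_{\g}(x),[y,z])+\varepsilon(x,y)\Psi(\alpha_{\g}(y),[z,x])+\varepsilon(y,z)\Psi(\alpha_{\g}(z),[x,y]).
\end{equation*}
Hence $\delta^2\Psi=0$ on all homogeneous triples is equivalent to the cyclic identity, and combining this with the $\mathscr{C}^2$-condition yields both halves of the equivalence claimed by the corollary.

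The only step that requires actual work is the bookkeeping of bicharacter factors in the last rewrite; the main obstacle is just ensuring that the $\varepsilon$-skew-symmetry of $\Psi$ and the bicharacter properties combine to give the precise cyclic coefficients, with no leftover sign. Everything else follows directly from the previous theorem and the definition of $\mathscr{Z}^2(\g,\mathbb{K})$.
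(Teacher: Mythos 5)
Your proposal is correct and follows the route the paper intends: the corollary is meant to be an immediate consequence of the preceding central-extension theorem together with the definition of $\mathscr{Z}^2(\g,\mathbb K)$ for the trivial module, which is exactly what you do. The bicharacter bookkeeping in your last step does close up cleanly — with $\Psi(u,v)=-\varepsilon(u,v)\Psi(v,u)$ one finds $\varepsilon(z,x)\,\delta^2\Psi(x,y,z)=\circlearrowleft_{x,y,z}\varepsilon(z,x)\Psi(\alpha_{\g}(x),[y,z]_{\g})$ with no residual sign, and the condition $\Psi\circ\alpha_{\g}^{\otimes 2}=\Psi$ is precisely the multiplicativity requirement.
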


Denote by $\mathcal{E}(\g,\mathbb K)$ the set of all equivalent classes of central extensions of a multiplicative color Hom-Lie algebra $\g$ by $\mathbb K$.\\

\begin{cor}\label{central-extension}
There exists a one-to-one correspondence between  $\mathcal{E}(\g,\mathbb K)$ and  the  color Hom-Lie second cohomology group $\mathscr{H}^2(\g, \mathbb K)$.\\
\end{cor}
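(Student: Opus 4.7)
The plan is to exhibit explicit maps in both directions and check they are mutually inverse. Starting from a central extension $0 \to \mathbb{K} \to \tilde{\g} \stackrel{\mu_2}{\to} \g \to 0$, I would first choose a homogeneous $\K$-linear section $s : \g \to \tilde{\g}$ of $\mu_2$ and set
\[
\Psi_s(x,y) \;=\; [s(x),s(y)]_{\tilde{\g}} \;-\; s([x,y]_{\g}).
\]
Since $\mu_2$ is a morphism of color Hom-Lie algebras, $\mu_2 \circ \Psi_s = 0$, so $\Psi_s$ takes values in $\ker\mu_2 \cong \mathbb K$. The $\varepsilon$-skew-symmetry of $\Psi_s$ is inherited from the bracket on $\tilde{\g}$, and the identity $\delta^2\Psi_s = 0$ follows by expanding each of the three terms $[s\circ\alpha_{\g}(x_i),[s(x_j),s(x_k)]_{\tilde{\g}}]_{\tilde{\g}}$ via the definition of $\Psi_s$, using centrality of $\mathbb K \subset \tilde{\g}$ to kill cross-terms, and invoking the $\varepsilon$-Hom-Jacobi identity in $\tilde{\g}$; the residue terms reorganise exactly into $\delta^2\Psi_s(x_0,x_1,x_2) = 0$ because $\rho = 0$ on $\mathbb K$ forces the coboundary formula to reduce to the purely bracket-type terms.

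Next I would show the class $[\Psi_s] \in \mathscr{H}^2(\g,\mathbb K)$ is well defined. If $s'$ is another section, then $s - s' = \mu_1 \circ \varphi$ for some $\varphi \in \mathscr{C}^1(\g,\mathbb K)$, and a direct expansion gives $\Psi_s - \Psi_{s'} = -\varphi \circ [\cdot,\cdot]_{\g} = \delta^1\varphi$ (because $\rho = 0$). Likewise, an equivalence of extensions $f : \tilde{\g} \to \tilde{\g}'$ sends any section $s$ of $\mu_2$ to a section $f\circ s$ of $\mu'_2$ with the same associated cocycle, so $[\Psi_s]$ depends only on the equivalence class of the extension. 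The inverse map $\mathscr{H}^2(\g,\mathbb K) \to \mathcal{E}(\g,\mathbb K)$ is already provided by the preceding corollary: to $[\Psi]$ associate the central extension $\g \oplus_\Psi \mathbb K$ defined by \eqref{product-central-extension}-\eqref{twisting-central-extension}. A cohomologous modification $\Psi' = \Psi + \delta^1\varphi$ produces an equivalent extension via the isomorphism $(x,k) \mapsto (x, k + \varphi(x))$, straightforward to verify.

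To conclude, I would verify both compositions are the identity. Starting from $[\Psi]$ and picking the tautological section $x \mapsto (x,0)$ in $\g \oplus_\Psi \mathbb K$ returns $\Psi$ on the nose. Conversely, starting from $\tilde{\g}$ with section $s$, the map $\tilde{\g} \to \g \oplus_{\Psi_s}\mathbb K,\ \tilde{x} \mapsto \bigl(\mu_2(\tilde{x}),\, \mu_1^{-1}(\tilde{x} - s(\mu_2(\tilde{x})))\bigr)$ is an equivalence of extensions, as a short check of compatibility with brackets, twisting maps, and the two horizontal arrows shows.

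The main obstacle I expect is the Hom-compatibility of cochains, namely the condition $\Psi \circ \alpha^{\otimes 2} = \beta \circ \Psi$ which, since $\beta = \mathrm{id}_{\mathbb K}$, becomes $\Psi(\alpha_{\g}(x),\alpha_{\g}(y)) = \Psi(x,y)$. A generic linear section $s$ does not intertwine the twisting maps, so $\alpha_{\tilde{\g}}\circ s - s\circ\alpha_{\g}$ need not vanish and $\Psi_s$ can fail the multiplicativity condition by an amount of the form $a \circ [\cdot,\cdot]_{\g}$ where $a : \g \to \mathbb K$ measures the failure. The fix is to exploit that $\mathbb K \subset \tilde{\g}$ is $\alpha_{\tilde{\g}}$-stable and that $\alpha_{\tilde{\g}}$ descends to $\alpha_{\g}$, so one may adjust $s$ by a 1-cochain (equivalently, modify $\Psi_s$ by $\delta^1$ of this cochain) to enforce the cochain condition, thereby landing genuinely in $\mathscr{Z}^2(\g,\mathbb K)$ without changing the cohomology class; this is the one spot where the color Hom-setting genuinely differs from the classical color Lie argument of \cite{wang2010non}.
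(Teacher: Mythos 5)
The paper itself offers no argument for this corollary --- it is asserted as an immediate consequence of the preceding corollary --- so your section-and-cocycle construction is the natural way to substantiate it, and most of its steps are sound: centrality of $\ker\mu_2$ together with the $\varepsilon$-Hom-Jacobi identity in $\tilde{\g}$ does yield $\circlearrowleft_{x,y,z}\varepsilon(z,x)\Psi_s(\alpha_{\g}(x),[y,z]_{\g})=0$, and in the other direction the equivalence $(x,k)\mapsto(x,k+\varphi(x))$ between the extensions attached to $\Psi$ and $\Psi+\delta^1\varphi$ commutes with the twisting maps precisely because $\varphi\in\mathscr{C}^1(\g,\K)$ forces $\varphi\circ\alpha_{\g}=\varphi$ (a point worth making explicit, since it is where the Hom-setting enters).

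The genuine gap is at the step you flagged, and the repair you propose does not close it. In a vector-space splitting $\tilde{\g}\cong\g\oplus\K$ the twist of an arbitrary multiplicative central extension reads $(x,k)\mapsto(\alpha_{\g}(x),k+a(x))$ with $a:=\alpha_{\tilde{\g}}\circ s-s\circ\alpha_{\g}$ valued in $\K$, whereas every extension built from a cocycle has $a=0$. Replacing $s$ by $s+\mu_1\circ\varphi$ replaces $a$ by $a+\varphi-\varphi\circ\alpha_{\g}$, so your adjustment can only absorb an $a$ lying in the image of $\mathrm{id}-\alpha_{\g}^{\ast}$; it fails whenever $\alpha_{\g}$ fixes a vector on which $a$ is nonzero. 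Such extensions exist: for $\g$ one-dimensional abelian with $\alpha_{\g}=\mathrm{id}$, the abelian algebra $\tilde{\g}=\K^{2}$ with twist $(x,k)\mapsto(x,k+\lambda x)$, $\lambda\neq0$, is a multiplicative central extension of $\g$ by $\K$ that is equivalent to no extension of the paper's form, while $\mathscr{H}^2(\g,\K)=0$. Hence, with $\mathcal{E}(\g,\K)$ read as \emph{all} central extensions, the bijection is false and no choice of section rescues it; the correct statement restricts $\mathcal{E}(\g,\K)$ to extensions admitting a section with $\alpha_{\tilde{\g}}\circ s=s\circ\alpha_{\g}$ (equivalently, whose twist is $\alpha_{\g}\oplus\mathrm{id}_{\K}$ in some splitting), which is exactly the class the paper's construction produces. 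Once that restriction is imposed, your argument goes through, and it also repairs a second, quieter defect in your well-definedness step: the difference of two \emph{arbitrary} sections is $\mu_1\circ\varphi$ with $\varphi$ a general linear map, so $\Psi_s-\Psi_{s'}=\delta^1\varphi$ need not be a coboundary in the paper's sense; for two $\alpha$-compatible sections one gets $\varphi\circ\alpha_{\g}=\varphi$ automatically, so $\varphi\in\mathscr{C}^1(\g,\K)$ and the class $[\Psi_s]\in\mathscr{H}^2(\g,\K)$ is well defined.
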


Following \cite{sheng2010representations}, we define $\alpha^k$-derivations.

\begin{definition}
Let $(\g, [\cdot, \cdot], \alpha, \varepsilon)$ be a multiplicative color Hom-Lie algebra. A homogeneous bilinear map    $D: \g \longrightarrow \g$ of degree $d$ is said to be an $\alpha^k$-derivation, where $k\in \mathbb N$, if it satisfies 
$$D\circ \alpha = \alpha\circ D,$$
$$D[x,y] = [D(x),{\alpha}^k(y)] + \varepsilon(d,x)[{\alpha}^k(x), D(y)], $$
for any homogeneous element $x$ and for any $y\in \g$.
\end{definition}

The set of all $\alpha^k$-derivations is denoted by $Der_{\alpha}^k(\g)$.  The space $Der(\g) = \bigoplus_{k\geq 0} Der_{\alpha^k}(\g)$ provided with the color-commutator is a color  Lie algebra. The fact that $ Der_{\alpha^k}(\g)$ is $\Gamma$-graded implies that $Der(\g)$ is $\Gamma$-graded 
$${(Der(\g))}_{\gamma} = \bigoplus_{k\geq 0} {(Der_{\alpha^k}(\g))}_{\gamma}, \ \ \ \forall \ \gamma\in \Gamma.$$
Moreover, similarly to the non-graded case \cite{sheng2010representations}, equipped with the color commutator and the following even map  
$$\widetilde{\alpha}: Der(\g) \longrightarrow Der(\g)\ ;\ D \longmapsto \widetilde{\alpha}(D)=D\circ \alpha, $$
$Der(\g)$ is a color Hom-Lie algebra.\\

\begin{definition}
Let $(\g, [\cdot, \cdot], \alpha, \varepsilon, B)$ be a quadratic color Hom-Lie algebra and $D: \g \longrightarrow \g$ be a homogeneous derivation of degree $d$. The derivation $D$ is said to be $\varepsilon$-skew-symmetric with respect to $B$ (or $B$-skew-symmetric) if it satisfies 
$$B(D(x),y) = - \varepsilon(d, x)B(x, D(y)), \ \ \forall x\in {\g}_x, \forall y\in \g.$$
\end{definition}

In the following we  characterize  scalar cocycles of quadratic multiplicative color Hom-Lie algebras using  $\varepsilon$-skew-symmetric derivations.\\
  
\begin{lem}\label{lemma}
Let $(\g,[\cdot,\cdot], \alpha, B, \varepsilon)$ be a quadratic multiplicative color Hom-Lie algebra. 
\begin{enumerate}
\item[(i)] If $\omega$ is a homogeneous scalar $2$-cocycle, then there exists a homogeneous $\varepsilon$-skew-symmetric $\alpha$-derivation $D$ of $\g$ such that 
\begin{equation}\label{equivalent-2cocycle-derivation}
\omega(x,y):=B(D(x),y),\ \forall x,y\in\g
\end{equation}
\item[(ii)] Conversely, if $D$ a homogeneous $\varepsilon$-skew-symmetric $\alpha$-derivation of $\g$, then $\omega$ defined in (\ref{equivalent-2cocycle-derivation}) is a homogeneous scalar $2$-cocycle  on $\g$ that is 
$$\circlearrowleft_{x,y,z}\omega\big(\alpha(x), [y,z]\big)=0, \ \ \forall x,y,z\in \g.$$
\end{enumerate}
\end{lem}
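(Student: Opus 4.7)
The plan is to use the non-degeneracy of $B$ to set up a correspondence between homogeneous scalar $2$-cochains $\omega$ on $\g$ and homogeneous linear endomorphisms $D$ on $\g$ via the rule $\omega(x,y)=B(D(x),y)$, and then to translate the cocycle condition on $\omega$ into the derivation identity on $D$ (and back) using the invariance of $B$, the $B$-symmetry of $\alpha$, and the multiplicativity of $\alpha$.

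For part $(i)$, I would first use non-degeneracy of $B$ to define, for a given homogeneous $2$-cocycle $\omega$ of degree $d$, a homogeneous linear map $D:\g\to\g$ of degree $d$ by declaring $\omega(x,y)=B(D(x),y)$ for every $y\in\g$. The $\varepsilon$-skew-symmetry of $D$ then follows from that of $\omega$ combined with the $\varepsilon$-symmetry of $B$, via
\[
B(D(x),y)=\omega(x,y)=-\varepsilon(x,y)\omega(y,x)=-\varepsilon(x,y)B(D(y),x)=-\varepsilon(d,x)B(x,D(y)),
\]
after noting $\varepsilon(D(y),x)=\varepsilon(d,x)\varepsilon(y,x)$ and $\varepsilon(x,y)\varepsilon(y,x)=1$. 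The relation $D\circ\alpha=\alpha\circ D$ will come from the cochain condition $\omega\circ\alpha^{\otimes2}=\omega$ together with $B$-symmetry of $\alpha$, which gives $\alpha\circ D\circ\alpha=D$ and hence the desired commutation in the standard regular setting. For the derivation identity $D[x,y]=[D(x),\alpha(y)]+\varepsilon(d,x)[\alpha(x),D(y)]$, I would pair both sides against $\alpha(z)$ under $B$ and match them: the left side becomes $\omega([x,y],\alpha(z))$, while the right side, after applying invariance of $B$, the multiplicativity identity $\alpha[\cdot,\cdot]=[\alpha\cdot,\alpha\cdot]$, and $B(\alpha(u),v)=B(u,\alpha(v))$, can be rewritten as a combination of $\omega(\alpha(x),[y,z])$ and $\omega(\alpha(y),[z,x])$; the cocycle identity then closes the gap.

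For part $(ii)$, I would simply run the computation in reverse. Setting $\omega(x,y)=B(D(x),y)$ for a homogeneous $\varepsilon$-skew-symmetric $\alpha$-derivation $D$ of degree $d$, I would expand
\[
\omega([y,z],\alpha(x))=B(D[y,z],\alpha(x))=B([D(y),\alpha(z)],\alpha(x))+\varepsilon(d,y)B([\alpha(y),D(z)],\alpha(x))
\]
via the derivation identity, then push the inner bracket around using invariance of $B$, multiplicativity of $\alpha$, $D\circ\alpha=\alpha\circ D$, and $B(\alpha(u),v)=B(u,\alpha(v))$, to recast the two summands, respectively, as $\omega(\alpha(y),[z,x])$ and a bicharacter multiple of $\omega(\alpha(z),[x,y])$. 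Flipping $\alpha(x)$ into the first slot via $\varepsilon$-skew-symmetry of $\omega$ and collecting the resulting bicharacter factors then delivers precisely $\circlearrowleft_{x,y,z}\omega(\alpha(x),[y,z])=0$.

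The main obstacle will be bicharacter bookkeeping: the $\varepsilon$-signs proliferate in every invariance step, and one must systematically apply $\varepsilon(a,b)\varepsilon(b,a)=1$ and the bicharacter additivity $\varepsilon(a,b+c)=\varepsilon(a,b)\varepsilon(a,c)$ to collapse strings like $\varepsilon(d,y)\varepsilon(y,d+z+x)$ down to the clean factors $\varepsilon(z,x),\varepsilon(x,y),\varepsilon(y,z)$ appearing in the cyclic sum. A secondary subtlety is that the cochain condition alone yields only $\alpha\circ D\circ\alpha=D$, so passing to $D\circ\alpha=\alpha\circ D$ uses invertibility of $\alpha$ (the standard regular Hom-Lie setting) and should be flagged in the argument.
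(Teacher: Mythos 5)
Your proposal is correct and follows essentially the same route as the paper: define $D$ by $B(D(x),y)=\omega(x,y)$ via non-degeneracy of $B$, read off homogeneity and $\varepsilon$-skew-symmetry from those of $\omega$ and $B$, and convert the scalar cocycle identity $\omega(\alpha(x),[y,z])=\omega([x,y],\alpha(z))+\varepsilon(x,y)\omega(\alpha(y),[x,z])$ into the derivation identity by pairing against $\alpha(z)$ and using invariance, multiplicativity and the $B$-symmetry of $\alpha$ (the paper dismisses part (ii) as a ``straightforward computation,'' which you actually spell out). One caution on the point you flag: from $\alpha\circ D\circ\alpha=D$ and invertibility of $\alpha$ one gets $D\circ\alpha=\alpha^{-1}\circ D$, not $D\circ\alpha=\alpha\circ D$, so that commutation --- which the paper also uses silently in passing from $B(D(\alpha(x)),[y,z])$ to $B([D(x),\alpha(y)],\alpha(z))$ --- does not follow from regularity alone and needs a further hypothesis (e.g.\ $\alpha^2=\mathrm{id}$) or a separate argument.
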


\begin{proof}
$(i)$ The fact that $B$ is nondegenerate implies $\phi:\g \longrightarrow \g^*$ defined by $\phi(x)=B(x,.)$ is an isomorphism of $\Gamma$-graded vector spaces. Let $y\in \g$, since $\omega(y,.)\in \g^*$ then there exists a unique $Y_y\in \g$ such that $\phi(Y_y)=\omega(y,.)$, namely $B(Y_y,z) = \omega(y,z)$, $\forall z\in \g$. Set $D(y) = Y_y$, the map $D$ is well-defined by the uniqueness of $Y_y$. By using the fact that $B$ is even and $\omega$ is homogeneous, we deduce that $D$ is homogeneous with the same degree as $\omega$. The $B$-skew-symmetry of $D$ comes from the $\varepsilon$-symmetry of $B$ and the $\varepsilon$-skew-symmetry of $\omega$. Finally, for  homogeneous elements $x,y,z\in\g$, we have
$$\omega(\alpha(x),[y,z])=\omega([x,y],\alpha(z))+\varepsilon(x,y)\omega(\alpha(y),[x,z]),$$
  which is equivalent to
   $$B(D(\alpha(x)),[y,z])=B(D([x,y]),\alpha(z))+\varepsilon(x,y)B(D(\alpha(y)),[x,z]).$$
   Thus
   $$B([D(x),\alpha(y)],\alpha(z))=B(D([x,y]),\alpha(z))+\varepsilon(x,y)B([D(y),\alpha(x)],\alpha(z)).$$
Nondegeneracy of $B$  leads to
  $$D([x,y])=[D(x),\alpha(y)]-\varepsilon(x,y)[D(y),\alpha(x)].$$
$(ii)$ is obtained by straightforward computations.\\

\end{proof}

The following corollary which is an immediately consequence of  Theorem \ref{central-extension} and  Lemma \ref{lemma}, defines the first extension. It is used to define the double extension of quadratic color Hom-Lie algebras.\\

\begin{cor}
Let  $(\g, {[\cdot,\cdot]}_{\g}, {\alpha}_{\g},B)$ be a quadratic color Hom-Lie algebra and $D: \g\longrightarrow \g$ be an even skew-symmetric $\alpha$-derivation, then the $\Gamma$-graded vector space $\g\oplus {\mathbb K}$ provided with the even linear map $\alpha_{\g}\oplus id$ and the following product 
$$[x + \lambda, y + \eta] = {[x,y]}_{\g} + B(D(x),y)$$ 
is a color Hom-Lie algebra. 
\end{cor}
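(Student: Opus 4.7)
The plan is to recognize this corollary as an immediate synthesis of the central extension theorem with the ``even'' case ($d=0$) of part (ii) of the preceding Lemma, and then verify the hypotheses of the central extension theorem for the specific cocycle $\Psi(x,y) := B(D(x),y)$.

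First I would define the even bilinear map $\Psi : \g \times \g \longrightarrow \mathbb{K}$ by $\Psi(x,y) = B(D(x),y)$ and check that it is a $2$-cochain in $\mathscr{C}^2(\g,\mathbb{K})$: the $\varepsilon$-skew-symmetry $\Psi(x,y) = -\varepsilon(x,y)\Psi(y,x)$ follows from the $\varepsilon$-symmetry of $B$ combined with the $B$-skew-symmetry of $D$ (with $d=0$, the sign $\varepsilon(d,x)=1$ drops out and a short manipulation $B(D(x),y)=-B(x,D(y))=-\varepsilon(x,y)B(D(y),x)$ gives the claim). The compatibility with the twisting map, $\Psi(\alpha(x),\alpha(y))=\Psi(x,y)$, follows from $D\circ \alpha=\alpha\circ D$ and the $B$-symmetry $B(\alpha(\cdot),\cdot)=B(\cdot,\alpha(\cdot))$, so $\alpha$ acts trivially on the scalar $\Psi(x,y)$ in the sense required by the theorem.

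Next I would invoke part (ii) of the Lemma on scalar cocycles: since $D$ is an even $\varepsilon$-skew-symmetric $\alpha$-derivation, the map $\omega(x,y)=B(D(x),y)$ satisfies
\begin{equation*}
\circlearrowleft_{x,y,z} \varepsilon(z,x)\,\omega\bigl(\alpha(x),[y,z]_{\g}\bigr)=0.
\end{equation*}
Thus $\Psi$ fulfils precisely the cocycle condition required by the central extension theorem, namely
\begin{equation*}
\circlearrowleft_{x,y,z}\varepsilon(z,x)\,\Psi\bigl(\alpha_{\g}(x),[y,z]_{\g}\bigr)=0.
\end{equation*}

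Finally I would apply the central extension theorem with $M=\mathbb{K}$: the $\Gamma$-graded space $\g\oplus \mathbb{K}$, endowed with the bracket $[x+\lambda,y+\eta]=[x,y]_{\g}+\Psi(x,y)$ and the twist $\alpha_{\g}\oplus \mathrm{id}$, is therefore a color Hom-Lie algebra central extension of $\g$ by $\mathbb{K}$. The only step that deserves any care is the verification that $\Psi$ genuinely lies in $\mathscr{C}^2(\g,\mathbb{K})$ (i.e.\ is $\varepsilon$-skew-symmetric and $\alpha$-compatible); the substantive Hom-Jacobi condition is already packaged inside the Lemma, so no further computation is required.
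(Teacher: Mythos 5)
Your proof is correct and follows essentially the same route as the paper, which presents this corollary as an immediate consequence of the central extension theorem together with part (ii) of the lemma on scalar $2$-cocycles; your write-up simply makes explicit the verification that $\Psi(x,y)=B(D(x),y)$ is a valid $2$-cochain before invoking those two results.
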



\subsection{$T^*$-extensions of color Hom-Lie algebras}
We have the following main Theorem.
\begin{thm}
Let $(\mathfrak{g},[\cdot,\cdot]_\g,\alpha,\varepsilon)$ be a color Hom-Lie algebra and $\omega: \g \times \g \longrightarrow \g^*$ be an even bilinear map.  Assume that the coadjoint representation exists. The $\Gamma$-graded vector space $\mathfrak{g}\oplus\mathfrak{g}^*$, where ${(\g\oplus \g^*)}_{\gamma} = {\g}_{\gamma}\oplus {\g^*}_{\gamma}$ for $\gamma\in \Gamma$, provided with the following bracket and  linear map defined respectively by 
\begin{equation}\label{prodT*Ext}
[x+f,y+g]=[x,y]_\g+\omega(x,y)+\pi(x)g -\varepsilon(x,y)\pi(y)f,
\end{equation}
 $$\Omega(x+f)=\alpha(x)+f\circ\alpha, $$
where $\pi$ is the coadjoint representation of $\g$, is a color Hom-Lie algebra if and only if $w\in \mathscr{Z}^2(\g,\g^*)$. In this case, we call $\g \oplus \g^*$ the $T^*$-extension of $\g$ by means of $\omega$. 

If $B_{\g}$ is an invariant scalar product on $\g$, then 
\begin{enumerate}
\item the coadjoint representation exists,
\item  the $T^*$-extension of $\g$ admits also an invariant scalar product $B$ defined by  
\begin{align}\label{bilforT*Ex}
  B :(\mathfrak{g}\oplus\mathfrak{g}^*)^{\otimes2}  &\rightarrow \K \\ \nonumber
     (x+f,y+g)  &\mapsto B(x,y)+f(y)+\varepsilon(x,y) g(x).
\end{align}
\end{enumerate}
\end{thm}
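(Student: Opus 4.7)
The plan is to verify that the new structure $(\g \oplus \g^*, [\cdot,\cdot], \Omega, \varepsilon)$ satisfies the axioms of a color Hom-Lie algebra, with the cocycle condition on $\omega$ arising as the precise obstruction to the $\varepsilon$-Hom-Jacobi identity. First I would check $\varepsilon$-skew-symmetry of the extended bracket; the $\g$-component is inherited from $[\cdot,\cdot]_\g$, while the $\g^*$-component splits into the pieces $\pi(x)g - \varepsilon(x,y)\pi(y)f$ (skew-symmetric by construction) and $\omega(x,y)$, which is skew-symmetric because $\omega \in \mathscr{C}^2(\g,\g^*)$ means $\omega \colon \wedge^2\g \to \g^*$. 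Then I would expand the cyclic sum $\circlearrowleft_{x,y,z} \varepsilon(z,x)[\Omega(x+f), [y+g, z+h]]$ and project onto $\g$ and $\g^*$. The $\g$-projection gives exactly $\circlearrowleft \varepsilon(z,x)[\alpha(x), [y,z]_\g]_\g = 0$. The $\g^*$-projection splits into three bundles of terms: (a) those linear in $f$ only, which sum to $\varepsilon(\cdot,\cdot)\bigl(\pi(y)\pi(\alpha(z)) - \varepsilon(y,z)\pi(z)\pi(\alpha(y)) - \widetilde{\alpha}\circ\pi([y,z])\bigr)f$ (and cyclically), and vanish precisely by identity \eqref{dualrep} that defines when $\pi$ is a representation; (b) analogous terms in $g$ and $h$, treated identically; and (c) terms involving only $\omega$, which assemble into the $2$-cocycle equation $\delta^2\omega = 0$. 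This gives the equivalence with $\omega \in \mathscr{Z}^2(\g,\g^*)$.

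For the second half, I would handle existence of the coadjoint representation via the canonical map $\phi\colon \g \to \g^*$, $\phi(x) = B_\g(x,\cdot)$. Invariance of $B_\g$ yields $\phi\circ ad(x) = \pi(x)\circ\phi$ (up to the sign built into $\pi$), and $B$-symmetry of $\alpha$ gives $\phi\circ\alpha = \widetilde{\alpha}\circ\phi$, so $\phi$ intertwines the two Hom-module structures. Since $ad$ is always a representation of $\g$ on $(\g,\alpha)$ by the equivalent form \eqref{JacobiId2} of the Hom-Jacobi identity, transporting along the isomorphism $\phi$ forces $\pi$ to satisfy \eqref{RepreIdent}, i.e.\ identity \eqref{coadrep} holds. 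For the extended bilinear form $B$, I would verify the four properties in turn. $\varepsilon$-symmetry and $\Omega$-symmetry are immediate from the formula together with $B$-symmetry of $\alpha$. Non-degeneracy follows because, if $B(x+f,y+g)=0$ for all $y+g$, taking $y=0$ forces $f=0$ by duality and taking $g=0$ forces $x=0$ by non-degeneracy of $B_\g$. Invariance is checked by expanding both $B([x+f,y+g],z+h)$ and $B(x+f,[y+g,z+h])$, collecting terms by which of $f,g,h,\omega$ they contain, and matching them using invariance of $B_\g$, the definition of $\pi$, and the bicharacter identities \eqref{condi1-bicharacter}--\eqref{condi3-bicharacter}.

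The main obstacle, beyond careful sign bookkeeping with the bicharacter, will be at two points. First, in the Hom-Jacobi computation, one must see that the three bundles of terms in $f$, in $g$, and in $h$ really assemble separately into the representation identity \eqref{RepreIdent} for $\pi$, which requires rewriting each bundle into the canonical form $\pi([y,z])\circ\widetilde{\alpha} = \pi(\alpha(y))\pi(z) - \varepsilon(y,z)\pi(\alpha(z))\pi(y)$ by commuting bicharacter factors through. Second, in checking invariance of $B$, the terms containing $\omega$ give the relation $\omega(x,y)(z) = \varepsilon(x,y+z)\,\omega(y,z)(x)$, i.e.\ the $\varepsilon$-cyclicity of $\omega$; I expect this to emerge automatically from the cocycle condition together with $\varepsilon$-skew-symmetry of $\omega$, or else to be imposed tacitly on $\omega$ for the second part of the statement.
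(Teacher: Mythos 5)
Your treatment of the first half is essentially the paper's own proof: the paper expands the cyclic sum $\circlearrowleft_{x,y,z}\varepsilon(z,x)\big[\Omega(x+f),[y+g,z+h]\big]$, lets the $\g$-component vanish by the $\varepsilon$-Hom-Jacobi identity of $\g$, lets the $f$-, $g$-, $h$-bundles cancel because $\pi$ satisfies \eqref{RepreIdent} (this is exactly what the hypothesis ``the coadjoint representation exists'' provides), and reads the $2$-cocycle condition off the remaining $\omega$-terms. One small correction: each bundle assembles into \eqref{RepreIdent} for $\pi$ itself, in the form $\pi(\alpha(y))\pi(z)-\varepsilon(y,z)\pi(\alpha(z))\pi(y)=\pi([y,z]_{\g})\circ\widetilde{\alpha}$ as you write at the end, not into \eqref{dualrep}; the latter is the condition on $ad$ that guarantees $\pi$ is a representation, so the conclusion is the same. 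For the second half the paper only remarks that the argument is as in the non-graded case of Benayadi--Makhlouf, so your sketch of the intertwiner $\phi(x)=B_{\g}(x,\cdot)$ and of the four properties of $B$ is more explicit than the source.

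The one genuine gap is precisely the point you flagged: the $\varepsilon$-cyclicity $\omega(x,y)(z)=\varepsilon(x,y+z)\,\omega(y,z)(x)$ required for invariance of $B$ does \emph{not} follow from $\omega\in\mathscr{Z}^2(\g,\g^*)$ together with $\varepsilon$-skew-symmetry, so your first alternative fails and the condition must be imposed as an additional hypothesis (it is left tacit here, as in Bordemann's original construction, where such cocycles are singled out as ``cyclic''). A concrete counterexample: take $\g$ abelian with trivial grading, $\alpha=\mathrm{id}$ and any nondegenerate symmetric $B_{\g}$; then $\pi=0$, every skew-symmetric $\omega$ lies in $\mathscr{Z}^2(\g,\g^*)$, but for $\g=\K e_1\oplus\K e_2$ with dual basis $e^1,e^2$ and $\omega(e_1,e_2)=e^1=-\omega(e_2,e_1)$ one gets $\omega(e_1,e_2)(e_1)=1$ while $\omega(e_2,e_1)(e_1)=-1$, so cyclicity fails and the form \eqref{bilforT*Ex} is not invariant on the resulting $T^*$-extension. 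Everything else in your outline (skew-symmetry of the extended bracket, existence of $\pi$ via $\phi$, nondegeneracy and $\Omega$-symmetry of $B$) is correct and matches the cited non-graded argument.
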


\begin{proof}
For any  homogeneous elements $(x,f),\ (y,g),\ (z,h)$ in $\mathfrak{g}\oplus\mathfrak{g}^*$ we have 
\begin{eqnarray*}
\varepsilon(z,x)[\Omega(x+f),[y+g,z+h]]
&=&\varepsilon(z,x)[\alpha(x)+f\circ\alpha,[y,z]_\g+\omega(y,z)+\pi(y)h-\varepsilon(y,z)\pi(z)g]\\
&=&\varepsilon(z,x)[\alpha(x),[y,z]_\g]_\g+ \varepsilon(z,x)\omega(\alpha(x),[y,z]_\g)\\ & +& \varepsilon(z,x)\pi(\alpha(x))\omega(y,z)     
+\varepsilon(z,x)\pi(\alpha(x)) (\pi(y)h)\\ & -&\varepsilon(z,x)\varepsilon(y,z)\pi(\alpha(x))(\pi(z)g) -\varepsilon(x,y)\pi([y,z]_\g)f\circ \alpha.
\end{eqnarray*}

Consequently,
$$\circlearrowleft_{x,y,z} \varepsilon(z,x)\big[\Omega(x+f),[y+g,z+h]\big] = 0$$
if and only if 
\begin{eqnarray*}
0 &=& \pi(\alpha(x_0))\big( \omega(x_1,x_2)\big) - \varepsilon(x_0,x_1)\pi(\alpha(x_1))\big( \omega(x_0,x_2)\big) + \varepsilon(x_0+x_1,x_2) \pi(\alpha(x_2))\big( \omega(x_0,x_1)\big)\\
&+& \omega(\alpha(x_0), [x_1,x_2]) + \varepsilon(x_1,x_2) \omega([x_0,x_2], \alpha(x_1)) - \omega([x_0,x_1], \alpha(x_2)). 
\end{eqnarray*}
That is $\omega\in \mathscr{Z}^2(\g,\g^*)$.

The proof for the second point is similar to the proof for non-graded case in \cite{benayadi2010hom}.
\end{proof}

\section{ Color Hom-Leibniz algebras arising from  Faulkner construction}

Faulkner construction was introduced  in \cite{faulkner1973geometry}. Recently in \cite{figueroa2009deformations},  it was shown how  Faulkner construction gives rise to a  quadratic Leibniz algebras. In the following, we generalize this result to the case of color Hom-Leibniz algebras.  Color Hom-Leibniz algebras are a non $\varepsilon$-skew-symmetric generalization of color Hom-Lie algebras.

\begin{definition}
A color Hom-Leibniz algebra is a tuple $(\g, [\cdot,\cdot], \alpha, \varepsilon)$ consisting of a $\Gamma$-graded vector space $\g$, an even bilinear mapping $[\cdot,\cdot]$, an even linear map $\alpha: \g \longrightarrow \g$ and a  bicharacter $\varepsilon$ such that 
 $$\varepsilon(z,x) \big[\alpha(x), [y,z]\big] +  \varepsilon(x,y) \big[\alpha(y), [z,x]\big] + \varepsilon(y,z) \big[\alpha(z),[x,y] \big] = 0, \ \ \forall x,y,z\in \g.$$ \\
\end{definition}

\begin{proposition}
Let $({\g}, [\cdot, \cdot ], \alpha, \varepsilon )$ be a multiplicative color Hom-Lie algebra. If $\rho_1$ and $\rho_2$ are two representations of $\g$ on $(M_1,\beta_1)$ and $(M_2,\beta_2)$ respectively, then $\rho_1\otimes \rho_2$ is a representation of $\g$ on $(M_1\otimes M_2, \beta_1\otimes \beta_2)$ defined for  homogeneous elements $m_1\otimes m_2$  by  

 $$(\rho_1\otimes \rho_2)(x)(m_1\otimes m_2) \,=\, \rho_1(x)(m_1)\otimes \beta_2(m_2) \ +\,  \ \varepsilon(x,m)\  \beta_1(m_1)\otimes \rho_2(x)(m_2).$$\\
 
\end{proposition}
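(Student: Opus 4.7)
The plan is to verify directly the defining identity \eqref{RepreIdent} of a representation for $\widetilde{\rho}:=\rho_1\otimes\rho_2$, together with the multiplicativity condition \eqref{condition-two-of-multiplicative-representation} with respect to the twist $\beta_1\otimes\beta_2$. The multiplicativity is handled first by a short direct check: on a homogeneous tensor $m_1\otimes m_2$, both $(\beta_1\otimes\beta_2)\circ\widetilde{\rho}(x)$ and $\widetilde{\rho}(\alpha(x))\circ(\beta_1\otimes\beta_2)$ split into two summands, and the equality on each summand follows from $\beta_i\circ\rho_i(x)=\rho_i(\alpha(x))\circ\beta_i$, together with $\varepsilon(x,\beta_1(m_1))=\varepsilon(x,m_1)$ since $\beta_1$ is even.

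For the main identity, I will expand $\widetilde{\rho}(\alpha(x))\circ\widetilde{\rho}(y)$ applied to $m_1\otimes m_2$ by inserting the formula twice. This produces four summands:
\begin{align*}
\mathrm{A}&=\rho_1(\alpha(x))\rho_1(y)(m_1)\otimes\beta_2^2(m_2),\\
\mathrm{B}&=\varepsilon(x,y+m_1)\,\beta_1\rho_1(y)(m_1)\otimes\rho_2(\alpha(x))\beta_2(m_2),\\
\mathrm{C}&=\varepsilon(y,m_1)\,\rho_1(\alpha(x))\beta_1(m_1)\otimes\beta_2\rho_2(y)(m_2),\\
\mathrm{D}&=\varepsilon(x+y,m_1)\,\beta_1^2(m_1)\otimes\rho_2(\alpha(x))\rho_2(y)(m_2).
\end{align*}
Swapping $x\leftrightarrow y$ and multiplying by $-\varepsilon(x,y)$ yields the analogous terms $\mathrm{A}',\mathrm{B}',\mathrm{C}',\mathrm{D}'$. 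I then group them as $(\mathrm{A}+\mathrm{A}')+(\mathrm{D}+\mathrm{D}')+(\mathrm{B}+\mathrm{C}')+(\mathrm{C}+\mathrm{B}')$.

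Using the representation identity \eqref{RepreIdent} for $\rho_1$ on the first slot and for $\rho_2$ on the second slot, the diagonal pairs collapse to
\begin{align*}
\mathrm{A}+\mathrm{A}'&=\rho_1([x,y])\beta_1(m_1)\otimes\beta_2^2(m_2),\\
\mathrm{D}+\mathrm{D}'&=\varepsilon(x+y,m_1)\,\beta_1^2(m_1)\otimes\rho_2([x,y])\beta_2(m_2),
\end{align*}
which together make up exactly $\widetilde{\rho}([x,y])\circ(\beta_1\otimes\beta_2)(m_1\otimes m_2)$. It then remains to show that the off-diagonal pairs vanish. For $\mathrm{B}+\mathrm{C}'$, I push the inner $\beta_1$ past $\rho_1(y)$ via multiplicativity and rewrite the scalar $\varepsilon(x,y+m_1)=\varepsilon(x,y)\varepsilon(x,m_1)$ using \eqref{condi2-bicharacter}; both summands then become the same tensor up to opposite signs. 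An entirely analogous manipulation, using $\varepsilon(x,y)\varepsilon(y,x)=1$ from \eqref{condi1-bicharacter}, yields $\mathrm{C}+\mathrm{B}'=0$.

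The only real obstacle is bookkeeping: keeping track of the parity factors produced when $\alpha$, $\beta_1$, $\beta_2$, and the $\rho_i$ pass across tensor factors of various degrees, and in particular making sure the swap $x\leftrightarrow y$ produces the correct bicharacter so that $\mathrm{B}+\mathrm{C}'$ and $\mathrm{C}+\mathrm{B}'$ cancel. Once this is done carefully, no further computation is needed, and the proof is complete.
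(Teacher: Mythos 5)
Your proposal is correct and follows essentially the same route as the paper: verify multiplicativity directly, expand the defining identity into the four-plus-four summands, collapse the diagonal pairs via \eqref{RepreIdent} applied in each tensor factor, and cancel the off-diagonal pairs using the multiplicativity of $\rho_1,\rho_2$ and the bicharacter identities. No substantive difference from the paper's argument.
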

 
\begin{proof}

We  check for all $x,y$ in $ \g$ the following two identities:
\begin{eqnarray*}&&\big( \beta_1\otimes\beta_2 \big)\big( (\rho_1\otimes\rho_2)(x)(m_1\otimes m_2) \big) = \big( \rho_1\otimes \rho_2 \big)\big(\alpha(x) \big)\big((\beta_1\otimes\beta_2)(m_1\otimes m_2) \big)\\&&
\big(\rho_1\otimes \rho_2 \big)(\alpha(x))\circ \big(\rho_1\otimes \rho_2 \big)(y) - \ \varepsilon(x,y)\ \big(\rho_1\otimes \rho_2 \big)(\alpha(y))\circ \big(\rho_1\otimes \rho_2 \big)(x) = \big(\rho_1\otimes \rho_2 \big)([x,y])\circ \big(\beta_1\otimes \beta_2 \big).
\end{eqnarray*}

Let $m_1\in M_1$ and $m_2\in M_2$ be two homogeneous elements.
\begin{eqnarray*}
\big(\beta_1\otimes \beta_2 \big) \big( (\rho_1\otimes \rho_2)(x)(m_1\otimes m_2)\big) &=& (\beta_1\otimes \beta_2) \big(\rho_1(x)(m_1)\otimes \beta_2(m_2) \ +\,  \ \varepsilon(x,m)\  \beta_1(m_1)\otimes \rho_2(x)(m_2) \big)\\
&=& \rho_1(\alpha(x))(\beta_1(m_1))\otimes {\beta_2}^2(m_2) \ +\, \ \varepsilon(x,m)\  {\beta_1}^2(m_1)\otimes \rho_2(\alpha(x))(\beta_2(m_2))\\
&=& (\rho_1\otimes \rho_2)(\alpha(x))((\beta_1\otimes \beta_2)(m_1\otimes m_2)).
\end{eqnarray*}  
And 
\begin{eqnarray*}
&&\big[ \big(\rho_1\otimes \rho_2 \big)(\alpha(x))\circ \big(\rho_1\otimes \rho_2 \big)(y)\  - \ \varepsilon(x,y)\ \big(\rho_1\otimes \rho_2 \big)(\alpha(y))\circ (\rho_1\otimes \rho_2 \big)(x)\big](m_1\otimes m_2)\\
 &=& \rho_1(\alpha(x))\big( \rho_1(y)(m_1) \big)\otimes \beta_2^2(m_2)\  +\  \varepsilon(x, y+m_1)\ \beta_1 \big(\rho_1(y)(m_1) \big)\otimes \rho_2(\alpha(x))(\beta_2(m_2))\\
 &+& \varepsilon(y,m_1)\ \rho_1(\alpha(x))(\beta_1(m_1))\otimes \beta_2(\rho_2(y)(m_2)) \ + \  \varepsilon(x+y,m_1)\  {\beta_1}^2(m_1)\otimes \rho_2(\alpha(x))\big( \rho_2(y)(m_2) \big)\\
 &-& \varepsilon(x,y)\  \rho_1(\alpha(y)) \big(\rho_2(x)(m_1) \big)\otimes {\beta_2}^2(m_2) \  -\  \varepsilon(y,m)\ \beta_1 \big(\rho_1(x)(m_1) \big)\otimes \rho_2(\alpha(y))(\beta_2(m_2))\\
 &-& \varepsilon(x,y+m)\ \rho_1(\alpha(y))(\beta_1(m_1))\otimes \beta_2 \big( \rho_2(x)(m_2) \big) \ -\  \varepsilon(x,y+m_1) \varepsilon(y,m_1)\  {\beta_1}^2(m)\otimes \rho_2(\alpha(y)) \big( \rho_2(x)(m_2) \big).\\
\end{eqnarray*}

By virtue of (\ref{RepreIdent}) and (\ref{condition-two-of-multiplicative-representation}), we obtain  

\begin{eqnarray*}
&&\big[ \big(\rho_1\otimes \rho_2 \big)(\alpha(x))\circ \big(\rho_1\otimes \rho_2 \big)(y)\  - \ \varepsilon(x,y)\ \big(\rho_1\otimes \rho_2 \big)(\alpha(y))\circ (\rho_1\otimes \rho_2 \big)(x)\big](m_1\otimes m_2)\\
&=& \rho_1([x,y])(\beta_1(m_1))\otimes {\beta_2}^2(m_2) \ + \ \varepsilon(x+y,m_1){\beta_1}^2(m_1) \otimes \rho_2 ([x,y])(\beta_2(m_2))\\
&=& \big[ (\rho_1\otimes \rho_2)([x,y])\circ (\beta_1\otimes \beta_2) \big] (m_1\otimes m_2).
\end{eqnarray*}

\end{proof}

Now, let $(\mathfrak{g},[\cdot,\cdot ], \alpha,  \varepsilon, B)$ be a quadratic multiplicative color Hom-Lie algebra and  $\rho$ be a representation of $\g$ on $(M,\beta)$, such that both $\alpha$ and $\beta$ are two involutions. Assume that $\tilde{\rho}$ is a representation of $\g$ on $(M^*, \tilde{\beta})$, where $M^*$ is the dual space of $M$.  We denote the dual pairing between $M$ and $M^*$ by  $\langle-,-\rangle$. Recall that $\tilde{\rho}(x)(f) := - \varepsilon(x,f) f \circ \rho(x) $, $\forall x\in\g$, $f\in M^*$ and which means that $\langle m , \tilde{\rho}(x)(f) \rangle = - \varepsilon(m,x) \langle \rho(x)(m), f\rangle$, $\forall m\in M$. For all $m\in M$ and $f\in M^*$, we define the even linear map $\mathscr{D}:M\otimes M^* \longrightarrow \g$ as follows\\
\begin{equation}\label{sc}
B(x,\mathscr{D}(m\otimes f))=\langle \rho(\alpha(x))(m), f\rangle= \varepsilon(x + m,f)\  f( \rho(\alpha(x))(m)),\ \ \textrm{for\ all} \ x\in\mathfrak{g}.
\end{equation}

\begin{lem}
The even linear map $\mathscr{D}:M\otimes M^* \longrightarrow \g$ is a morphism of $\g$-Hom-modules, that is 
$$[x, \mathscr{D}(m\otimes f)] = \mathscr{D}(\rho(x)(m)\otimes \tilde{\beta}(f) \ +\  \varepsilon(x,m)\  \beta(m)\otimes \tilde{\rho}(x)(f)), \ \ \ \forall x\in\g,\ m\in M, \ f\in M^*.$$
\end{lem}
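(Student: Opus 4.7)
The plan is to use the nondegeneracy of $B$ and reduce the asserted module map identity to a pairing identity in $\g$. More precisely, it suffices to show that for every homogeneous $y\in\g$,
$$B\bigl(y,[x,\mathscr{D}(m\otimes f)]\bigr)=B\bigl(y,\mathscr{D}(\rho(x)(m)\otimes\tilde\beta(f))\bigr)+\varepsilon(x,m)\,B\bigl(y,\mathscr{D}(\beta(m)\otimes\tilde\rho(x)(f))\bigr).$$
Everything then follows because the two sides, viewed as elements of $\g$, have the same $B$-pairing against an arbitrary $y$.

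For the left hand side, I would exploit the invariance of $B$ to write $B(y,[x,\mathscr{D}(m\otimes f)])=B([y,x],\mathscr{D}(m\otimes f))$, then apply the defining formula \eqref{sc}. Using multiplicativity $\alpha([y,x])=[\alpha(y),\alpha(x)]$ and the $\varepsilon$-skew-symmetry $[y,x]=-\varepsilon(y,x)[x,y]$, this collapses to
$$-\varepsilon(y,x)\,\varepsilon(y+x+m,f)\,f\bigl(\rho([\alpha(x),\alpha(y)])(m)\bigr).$$

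For the right hand side, I would expand each of the two terms with \eqref{sc}, using $\tilde\beta(f)=f\circ\beta$ and $\tilde\rho(x)(f)=-\varepsilon(x,f)\,f\circ\rho(x)$, and push $\beta$ through $\rho$ by means of the multiplicativity condition \eqref{condition-two-of-multiplicative-representation}, namely $\beta\circ\rho(z)=\rho(\alpha(z))\circ\beta$. The assumption that $\alpha$ and $\beta$ are involutions is exactly what allows these transpositions to close up without leftover powers. A short bicharacter bookkeeping (collecting $\varepsilon(x,m)\varepsilon(m,x)=1$ and splitting $\varepsilon(\cdot,x+f)$) reduces the sum to
$$\varepsilon(y+x+m,f)\,f\!\left[\rho(y)\rho(\alpha(x))(\beta(m))-\varepsilon(y,x)\,\rho(x)\rho(\alpha(y))(\beta(m))\right].$$
Finally, I would recognize the bracketed expression using the representation identity \eqref{RepreIdent} applied to the pair $(y,x)$, together with $\alpha^{2}=\operatorname{id}$ and $\beta^{2}=\operatorname{id}$, to rewrite it as $-\varepsilon(y,x)\,\rho([\alpha(x),\alpha(y)])(m)$, which matches the left hand side exactly.

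The main obstacle I anticipate is the sign/bicharacter bookkeeping: several instances of $\varepsilon$ arise from the dual pairing, from the $\varepsilon$-symmetry of $B$, from the twisted dual representation $\tilde\rho$, and from the $\varepsilon$-skew-symmetry of $[\cdot,\cdot]$, and these must combine cleanly. The essential structural input is using both involutions $\alpha^{2}=\operatorname{id}$, $\beta^{2}=\operatorname{id}$ to commute $\alpha$ and $\beta$ through $\rho$ on both sides; without that hypothesis, leftover factors of $\alpha$ or $\beta$ would prevent the direct identification of the bracket via \eqref{RepreIdent}.
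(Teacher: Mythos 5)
Your proposal is correct and follows essentially the same route as the paper's proof: pair the claimed identity against an arbitrary homogeneous $y$ via the nondegenerate invariant form $B$, expand both sides using the defining relation \eqref{sc}, the dual pairing for $\tilde\rho$, and the multiplicativity condition \eqref{condition-two-of-multiplicative-representation}, and then close the argument with the representation identity together with the involution hypotheses $\alpha^{2}=\mathrm{id}$, $\beta^{2}=\mathrm{id}$. The only cosmetic difference is that you meet in the middle while the paper runs a single chain of equalities from the right-hand side to $B(y,[x,\mathscr{D}(m\otimes f)])$.
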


\begin{proof}

Let $m$ and $f$ be  homogeneous elements in respectively $M$ and $M^*$ and $y$ be a homogeneous element in $\g$. 
\begin{eqnarray*}
&&B(y, \mathscr{D} \big(\rho(x)(m)\otimes \tilde{\beta}(f) \ +\  \varepsilon(x,m)\ \beta(m)\otimes \tilde{\rho}(x)(f)\big) )\\
 &=&  B(y, \mathscr{D} \big(\rho(x)(m)\otimes \tilde{\beta}(f)\big) ) \ +\  \varepsilon(x,m)\  B(y, \mathscr{D} \big(\beta(m)\otimes \tilde{\rho}(x)(f) \big))\\
 &=&  \langle \rho \big(\alpha(y) \big) \big(\rho(x)(m) \big), \tilde{\beta}(f)\rangle \ +\  \varepsilon(x,m)\  \langle \rho \big(\alpha(y) \big) \big(\beta(m) \big), \tilde{\rho}(x)(f)\rangle\\
 &=& \langle \beta \big[\rho\big(\alpha(y) \big) \big(\rho(x)(m) \big)\big], f\rangle \ - \  \varepsilon(x,m)\varepsilon(y+m, x)\  \langle \rho(x)\big[\rho \big(\alpha(y) \big) \big(\beta(m) \big)\big], f\rangle\\
 &=& \langle \rho(y) \big(\rho(\alpha(x))(\beta(m)) \big) \ -\  \varepsilon(y,x)\  \rho(x) \big(\rho(\alpha(y))(\beta(m)) \big),f   \rangle\\
 &=& \langle \big(\rho(\alpha[y,x])\circ \beta \big) (\beta(m)), f\rangle \\
 &=& B([y,x], \mathscr{D}(m\otimes f))\\
 &=& B(y, [x, \mathscr{D}(m\otimes f)]). 
\end{eqnarray*}
 
Consequently, $\mathscr{D}$ is a morphism of $\g$-Hom-modules.

\end{proof}

In \cite{de2009lie}, it was proved  that if $(\rho, M)$ is a faithful representation, then $\mathscr{D}$ is surjective. In the sequel, we assume  that $\rho$ on $(M,\beta)$ is a faithful representation. Since $\alpha $ is invertible, then $\mathscr{D}$ is still surjective. Using this argument, it follows that the fact that $\mathscr{D}$ is a morphism of $\g$-Hom-modules is equivalent to\\ 
\begin{equation}\label{equivalent-morphism-of-g-modules}
[\mathscr{D}(m\otimes f),\mathscr{D}(m'\otimes f')]=\mathscr{D}(\rho\big(\mathscr{D}(m\otimes f)\big)(m')\otimes \tilde{\beta}(f')) \ +\  \varepsilon(m + f,m')\  \mathscr{D}(\beta(m')\otimes \tilde{\rho}\big(\mathscr{D}(m\otimes f)\big)(f')).
\end{equation}\\

\begin{proposition}
The $\Gamma$-graded vector space $M\otimes M^*$ provided with  the following bracket 
\begin{equation}\label{bbb}[m\otimes f,m'\otimes f']=\rho \big(\mathscr{D}(m\otimes f) \big)(m')\otimes \tilde{\beta}(f') \ +\ \varepsilon(m+f, m') \ \beta(m')\otimes \tilde{\rho} \big(\mathscr{D}(m\otimes f) \big)(f')
\end{equation}
and the even linear map $\beta\otimes \tilde{\beta}$ is a color Hom-Leibniz algebra.\\ 
\end{proposition}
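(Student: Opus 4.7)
The crucial observation is that the bracket (\ref{bbb}) can be rewritten compactly as
$$[X,Y]=(\rho\otimes\widetilde{\rho})(\mathscr{D}(X))(Y),$$
the action of the tensor-product representation of $\g$ on $(M\otimes M^*,\beta\otimes\widetilde{\beta})$ (constructed in the previous proposition) evaluated at $\mathscr{D}(X)\in\g$. With this reformulation, the color Hom-Leibniz identity on $M\otimes M^*$ is pulled back from the representation axiom (\ref{RepreIdent}) for $\rho\otimes\widetilde{\rho}$ via the compatibility of $\mathscr{D}$ with the twists and with the brackets.

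\textbf{Preparatory steps.} Two properties of $\mathscr{D}$ need to be established. First, I would prove the twist-intertwining
$$\mathscr{D}\circ(\beta\otimes\widetilde{\beta})=\alpha\circ\mathscr{D}$$
by testing both sides against $B(x,-)$ for arbitrary $x\in\g$, applying the defining relation (\ref{sc}), the $B$-symmetry of $\alpha$, the intertwining (\ref{condition-two-of-multiplicative-representation}), namely $\beta\circ\rho(x)=\rho(\alpha(x))\circ\beta$, and invoking the involution hypotheses $\alpha^2=\mathrm{id}$, $\beta^2=\mathrm{id}$; both sides then collapse to the same expression. Second, I would recast the $\g$-equivariance (\ref{equivalent-morphism-of-g-modules}) as the assertion that $\mathscr{D}$ is a bracket homomorphism,
$$\mathscr{D}([X,Y])=[\mathscr{D}(X),\mathscr{D}(Y)]_\g.$$

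\textbf{Closing the argument.} I would then set $u=\mathscr{D}(X)$, $v=\mathscr{D}(Y)$ and $\Omega=\beta\otimes\widetilde{\beta}$, expand
$$[\Omega(X),[Y,Z]]=(\rho\otimes\widetilde{\rho})(\alpha(u))\bigl((\rho\otimes\widetilde{\rho})(v)(Z)\bigr),$$
and apply the representation axiom (\ref{RepreIdent}) for $\rho\otimes\widetilde{\rho}$ to $(u,v)$ acting on $Z$. Relabelling each resulting term via the two properties above yields
$$[\Omega(X),[Y,Z]]-\varepsilon(x,y)\,[\Omega(Y),[X,Z]]=[[X,Y],\Omega(Z)],$$
which is exactly the equivalent form (\ref{JacobiId2}) of the color Hom-Jacobi identity for the bracket on $M\otimes M^*$, and hence the color Hom-Leibniz identity in its cyclic form. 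The main obstacle is the careful $\varepsilon$-bookkeeping when expanding the tensor-product representation on compositions: the bicharacter relations (\ref{condi1-bicharacter})--(\ref{condi3-bicharacter}) and the evenness of $\mathscr{D}$ (so that $\deg\mathscr{D}(X)=\deg X$) are what allow the intermediate signs to coalesce into the single factor $\varepsilon(u,v)=\varepsilon(x,y)$ dictated by the representation axiom.
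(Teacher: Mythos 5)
Your proof is correct, but it takes a genuinely different route from the paper's. The paper proves the identity by brute force: it expands all three nested brackets $[\beta(m)\otimes \tilde{\beta}(f),[m'\otimes f',m''\otimes f'']]$, $[\beta(m')\otimes \tilde{\beta}(f'),[m\otimes f,m''\otimes f'']]$ and $[[m\otimes f,m'\otimes f'],\beta(m'')\otimes \tilde{\beta}(f'')]$ into four terms each and then cancels them using \eqref{equivalent-morphism-of-g-modules}. You instead factor the bracket as $[X,Y]=(\rho\otimes\tilde{\rho})(\mathscr{D}(X))(Y)$ and reduce the Hom-Leibniz identity to the representation axiom \eqref{RepreIdent} for the tensor-product representation (the preceding proposition), via two compatibilities of $\mathscr{D}$: the intertwining $\mathscr{D}\circ(\beta\otimes\tilde{\beta})=\alpha\circ\mathscr{D}$ and the bracket-homomorphism reading of \eqref{equivalent-morphism-of-g-modules}. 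Both compatibilities check out (the first follows from \eqref{sc}, the $B$-symmetry of $\alpha$, condition \eqref{condition-two-of-multiplicative-representation} and the involution hypotheses on $\alpha$ and $\beta$; the second is literally \eqref{equivalent-morphism-of-g-modules} since its right-hand side is $\mathscr{D}$ applied to \eqref{bbb}), and the degree bookkeeping $\varepsilon(\mathscr{D}(X),\mathscr{D}(Y))=\varepsilon(X,Y)$ is justified by the evenness of $\mathscr{D}$. What your approach buys is transparency: the twelve-term cancellation of the paper becomes a three-line formal consequence of the representation axiom, and it isolates exactly where the involution hypotheses enter (the paper uses the identity $\mathscr{D}(\beta(m)\otimes\tilde{\beta}(f))=\alpha(\mathscr{D}(m\otimes f))$ only implicitly inside its expansion). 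One caveat on your last sentence: the identity you establish is the non-cyclic Leibniz form \eqref{JacobiId2}, and since the bracket \eqref{bbb} is not $\varepsilon$-skew-symmetric this does not formally imply the cyclic identity used in the paper's definition of a color Hom-Leibniz algebra; however, the paper's own proof establishes exactly the same non-cyclic identity, so this discrepancy lies in the paper's definition rather than in your argument.
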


\begin{proof}
For  homogeneous elements $m\otimes f$, $m'\otimes f'$ and $m''\otimes f''$ in $M\otimes M^*$, we have:

\begin{eqnarray*}
&& \big[\beta(m)\otimes \tilde{\beta}(f), [m'\otimes f', m''\otimes f'']\big] = \rho \big(\mathscr{D}(\beta(m)\otimes \tilde{\beta}(f)) \big) \big[\rho\big( \mathscr{D}(m'\otimes f') \big) (m'') \big] \otimes {\tilde{\beta}}^2(f'')\\ && \quad
+ \varepsilon(m+f , m' + m'' +f') \beta\big[ \rho\big(\mathscr{D}(m'\otimes f')\big)(m'') \big]\otimes \tilde{\rho} \big( \mathscr{D}(\beta(m)\otimes \tilde{\beta}(f)) \big) (\tilde{\beta}(f''))\\ && \quad
+ \varepsilon(m'+f', m'')\  \rho \big( \mathscr{D}(\beta(m)\otimes \tilde{\beta}(f)) \big) (\beta(m''))\otimes \tilde{\beta} \big[ \tilde{\rho} \big( \mathscr{D}(m'\otimes f') \big)(f'') \big]\\ && \quad
+ \varepsilon(m+f+m'+f' , m''){\beta}^2( m'')\otimes \tilde{\rho}\big( \mathscr{D}(\beta(m)\otimes \tilde{\beta}(f))\big) \big[ \tilde{\rho} \big((\mathscr{D}(m'\otimes f')\big)(f') \big].
\end{eqnarray*}

\begin{eqnarray*}
 && \big[\beta(m')\otimes \tilde{\beta}(f') , [m\otimes f, m''\otimes f'']\big] = - \rho\Big(\mathscr{D}(\beta(m')\otimes \tilde{\beta}(f'))\Big) \Big[\rho \Big( \mathscr{D}(m\otimes f)\Big) (m'')\Big]\otimes {\tilde{\beta}}^2(f'')\\ && \quad
 - \varepsilon(m' + f' , m+ f+ m'')\  \beta \Big[\rho\Big(\mathscr{D}(m\otimes f)\Big) (m'') \Big]\otimes \tilde{\rho}\Big( \mathscr{D}( \beta(m')\otimes \tilde{\beta}(f')) \Big) (\tilde{\beta}(f''))\\ && \quad
 - \varepsilon(m+f,m'')\  \rho\Big(\mathscr{D}(\beta(m')\otimes \tilde{\beta}(f'))\Big) (\beta(m''))\otimes \tilde{\beta} \Big[\tilde{\rho}\Big(\mathscr{D}(m\otimes f) \Big)(f'') \Big]\\ && \quad
 -\varepsilon(m+f +m'+f', m'')\  \beta^2(m'')\otimes \tilde{\rho} \Big( \mathscr{D}(\beta(m')\otimes \tilde{\beta}(f')) \Big) \Big[ \tilde{\rho} \Big( \mathscr{D}(m\otimes f) \Big)(f'')\Big].
\end{eqnarray*}

\begin{eqnarray*}
&&  \big[[m\otimes f, m'\otimes f'], \beta(m'')\otimes \tilde{\beta}(f'')\big] 
 =\rho \Big[ \mathscr{D} \Big( \rho \big(\mathscr{D}(m\otimes f) \big)(m')\otimes \tilde{\beta}(f') \Big) \Big] (\beta(m''))       \otimes {\tilde{\beta}}^2(f'')\\ && \quad
+ \varepsilon(m+f+m'+f', m'') \beta^2(m'')\otimes  \tilde{\rho} \Big[ \mathscr{D} \Big( \rho \big( \mathscr{D}(m\otimes f) \big) (m')\otimes \tilde{\beta}(f')) \Big) \Big] \tilde{\beta}(f'')\\ && \quad
+ \varepsilon(m+f,m')\ \rho \Big[ \mathscr{D} \Big( \beta(m')\otimes \tilde{\rho} \big( \mathscr{D}(m\otimes f )\big)(f') \Big)  \Big] \beta(m'') \otimes {\tilde{\beta}}^2(f'')\\ && \quad
+ \varepsilon(m+f,m')\varepsilon(m+f+m'+f', m'') \beta^2(m'')\otimes \tilde{\rho} \Big[ \mathscr{D} \Big(\beta(m')\otimes \tilde{\rho}\big( \mathscr{D}(m\otimes f) \big)(f') \Big) \Big] \tilde{\beta}(f'')
\end{eqnarray*}

By using  condition (\ref{equivalent-morphism-of-g-modules}), we deduce that 

\begin{eqnarray*}
\big[\beta(m)\otimes \tilde{\beta}(f), [m'\otimes f', m''\otimes f'']\big]  - \varepsilon(m+f, m'+f') \big[\beta(m')\otimes \tilde{\beta}(f') , [m\otimes f, m''\otimes f'']\big]\\ - \big[[m\otimes f, m'\otimes f'], \beta(m'')\otimes \tilde{\beta}(f'')\big] =0.
\end{eqnarray*} 
\end{proof}

\begin{cor}
Consider the Hom-Leibniz algebra $(M\otimes M^*, [,], \beta\otimes \tilde{\beta})$ defined above. If the morphism of $\g$-Hom-modules $\mathscr{D}$ is bijective, then $(M\otimes M^*, [,], \beta\otimes \tilde{\beta})$ is a quadratic Hom-Leibniz algebra such that the quadratic structure is given by $B: M\otimes M^* \times M\otimes M^* \longrightarrow \K$ such that 
\begin{eqnarray*} 
  B(m\otimes f, m'\otimes f')= B_{\g}(\mathscr{D}(m\otimes f), \mathscr{D}(m'\otimes f')). 
\end{eqnarray*}
\end{cor}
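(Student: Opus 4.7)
The plan is to transport the quadratic structure on $\g$ along $\mathscr{D}$ and then to use the morphism-of-modules property \eqref{equivalent-morphism-of-g-modules} to check that the four required conditions (non-degeneracy, $\varepsilon$-symmetry, invariance, and $B$-symmetry of the twist) descend to the pulled-back form $B$. Because $\mathscr{D}$ is assumed bijective and homogeneous of degree zero, pulling back along $\mathscr{D}$ preserves both non-degeneracy and $\varepsilon$-symmetry: for non-degeneracy, if $B(u,v) = B_{\g}(\mathscr{D}(u),\mathscr{D}(v)) = 0$ for all $v$, then by surjectivity of $\mathscr{D}$ and non-degeneracy of $B_{\g}$ we get $\mathscr{D}(u)=0$, hence $u=0$; $\varepsilon$-symmetry of $B$ follows directly from that of $B_{\g}$ since $\mathscr{D}$ preserves degrees.

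The second step is to observe that \eqref{equivalent-morphism-of-g-modules} together with the definition \eqref{bbb} of the bracket on $M\otimes M^*$ is exactly the statement that $\mathscr{D}$ is a bracket-homomorphism, i.e.\ $\mathscr{D}([u,v])=[\mathscr{D}(u),\mathscr{D}(v)]_{\g}$ for all $u,v\in M\otimes M^*$. Invariance of $B$ under the Hom-Leibniz bracket is then immediate: for homogeneous $u_1,u_2,u_3$ we have
\[
B([u_1,u_2],u_3)=B_{\g}\bigl([\mathscr{D}(u_1),\mathscr{D}(u_2)],\mathscr{D}(u_3)\bigr)=B_{\g}\bigl(\mathscr{D}(u_1),[\mathscr{D}(u_2),\mathscr{D}(u_3)]\bigr)=B(u_1,[u_2,u_3]),
\]
using invariance of $B_{\g}$ in the middle.

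The remaining point is $B$-symmetry of the twist $\beta\otimes\tilde\beta$, and for this the crucial lemma is the intertwining identity $\mathscr{D}\circ(\beta\otimes\tilde\beta)=\alpha\circ\mathscr{D}$. To verify it, one computes $B_{\g}\bigl(x,\mathscr{D}(\beta(m)\otimes\tilde\beta(f))\bigr)=\langle \rho(\alpha(x))(\beta(m)),\tilde\beta(f)\rangle$ from \eqref{sc}, moves $\tilde\beta$ to the other side of the pairing as $\beta$, and then uses the multiplicativity relation \eqref{condition-two-of-multiplicative-representation} together with $\alpha^2=\mathrm{id}$, $\beta^2=\mathrm{id}$ to rewrite the result as $\langle \rho(x)(m),f\rangle=B_{\g}(\alpha(x),\mathscr{D}(m\otimes f))$; $B_{\g}$-symmetry of $\alpha$ then yields the intertwining. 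Once that is in hand, $B$-symmetry of $\beta\otimes\tilde\beta$ follows by a two-line computation using the $B_{\g}$-symmetry of $\alpha$ on $\g$.

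The only step with any real content is the second one—recognising that the morphism-of-modules identity \eqref{equivalent-morphism-of-g-modules} is exactly $\mathscr{D}([u,v])=[\mathscr{D}(u),\mathscr{D}(v)]$—and the careful bookkeeping with the bicharacter and the two involutions $\alpha,\beta$ in the intertwining computation for the twist. Beyond that, everything is pure transport of structure, and because $\mathscr{D}$ is an even bijection the passage between $\g$ and $M\otimes M^*$ introduces no sign subtleties that are not already handled by the hypotheses on $B_{\g}$ and $\alpha$.
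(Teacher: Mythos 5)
Your proposal is correct, and it is in substance the same route the paper takes: the paper's own proof consists only of the assertion that the $\varepsilon$-symmetry, the $B$-symmetry of $\beta\otimes\tilde\beta$, and the invariance follow ``by straightforward computations,'' and what you have written is precisely the transport-of-structure argument that makes those computations work. The two facts you isolate --- that identity \eqref{equivalent-morphism-of-g-modules} says exactly $\mathscr{D}([u,v])=[\mathscr{D}(u),\mathscr{D}(v)]_{\g}$, and that \eqref{sc}, \eqref{condition-two-of-multiplicative-representation} and the involutivity of $\alpha,\beta$ give the intertwining $\mathscr{D}\circ(\beta\otimes\tilde\beta)=\alpha\circ\mathscr{D}$ --- are left implicit in the paper, and making them explicit (together with the non-degeneracy check, which the paper does not even mention) is a genuine improvement in presentation rather than a departure in method.
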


\begin{proof}
By straightforward  computations, we prove that $B$ is $\varepsilon$-symmetric, $\beta\otimes \tilde{\beta}$ is $B$-symmetric and $B$ is invariant.  
\end{proof}


\section*{\textbf{BIBLIOGRAPHY}}


\begin{thebibliography}{10}



\bibitem{aizawa1991q}
N. Aizawa and H. Sato,
\newblock {\it q-deformation of the {V}irasoro algebra with central extension},
\newblock Physics Letters B
\newblock {\bf  256} (1991), 185--190.

\bibitem{albuquerque2010odd}
H. Albuquerque, E. Barreiro, and S. Benayadi,
\newblock {\it Odd-quadratic {L}ie superalgebras},
\newblock Journal of Geometry and Physics
\newblock {\bf  60} (2010), 230--250.

\bibitem{AEM} F. Ammar, Z. Ejbehi     and A. Makhlouf, \emph{ Cohomology and Deformations of Hom-algebras, } Journal of Lie Theory \textbf{21} No. 4,  (2011) 813--836 .




\bibitem{ammar2011quadratic}
F. Ammar, S. Mabrouk, and A. Makhlouf,
\newblock {\it Quadratic $ n $-ary {H}om-{N}ambu algebras},
\newblock
\newblock (2011), e-Print: arXiv:1110.2078.

\bibitem{ammar2010hom}
F. Ammar and A. Makhlouf,
\newblock {\it Hom-{L}ie superalgebras and {H}om-{L}ie admissible
  superalgebras},
\newblock Journal of Algebra
\newblock {\bf  324} (2010), 1513--1528.


\bibitem{ayadi2010lie}
I. Ayadi, H. Benamor, and S. Benayadi,
\newblock {\it Lie superalgebras with some homogeneous structures},
\newblock
\newblock To appear in Journal of Algebra and Its Applications, e-Print
  arXiv:1011.2516.

\bibitem{bajo2007generalized}
I. Bajo, S. Benayadi, and M. Bordemann,
\newblock {\it Generalized double extension and descriptions of quadratic {L}ie
  superalgebras},
\newblock
\newblock (2007), e-Print arXiv:0712.0228.

\bibitem{benamor1999double}
H. Benamor and S. Benayadi,
\newblock {\it Double extension of quadratic {L}ie superalgebras},
\newblock Communications in Algebra
\newblock {\bf  27} (1999), 67--88.

\bibitem{benayadi2010hom}
S. Benayadi and A. Makhlouf,
\newblock {\it Hom-{L}ie algebras with symmetric invariant nondegenerate
  bilinear forms},
\newblock
\newblock (2010), e-Print: arXiv:1009.4226.


\bibitem{benayadi2003socle}
S. Benayadi,
\newblock {\it Socle and some invariants of quadratic {L}ie superalgebras},
\newblock {Journal of Algebra},
\newblock {\bf 261}, 2 (2003), 245--291.
 





\bibitem{benayadi2000quadratic}
S. Benayadi,
\newblock {\it Quadratic {L}ie superalgebras with the completely reducible
  action of the even part on the odd part},
\newblock Journal of Algebra
\newblock {\bf  223} (2000), 344--366.


\bibitem{chaichian1990quantum}
M. Chaichian, D. Ellinas, and Z. Popowicz,
\newblock {\it Quantum conformal algebra with central extension},
\newblock Physics Letters B
\newblock {\bf  248} (1990), 95--99.

\bibitem{chaichian1990q}
M. Chaichian, P. Kulish, and J. Lukierski,
\newblock {\it q-{D}eformed {J}acobi identity, q-oscillators and q-deformed
  infinite-dimensional algebras},
\newblock Physics Letters B
\newblock {\bf  237} (1990), 401--406.


\bibitem{de2009lie}
P. De Medeiros, J. Figueroa-O�Farrill, E. M{\'e}ndez-Escobar and P. Ritter,
\newblock {\it On the {L}ie-algebraic origin of metric 3-algebras},
\newblock {Communications in Mathematical Physics},
\newblock {\bf 290} 3 (2009), 871--902.

\bibitem{elhamdadi2010deformations}
M. Elhamdadi and A. Makhlouf,
\newblock {\it Deformations of {H}om-{A}lternative and {H}om-{M}alcev
  algebras},
\newblock
\newblock (2010), To appear in Group, Algebra and Geometries,  e-Print: arXiv:1006.2499.


\bibitem{faulkner1973geometry}
J.R. Faulkner,
\newblock {On the geometry of inner ideals},
\newblock {Journal of Algebra},
\newblock {\bf 26} 1 (1973), 1--9. 


\bibitem{figueroa2009deformations}
J. Figueroa-O�Farrill,
\newblock {\it Deformations of 3-algebras},
\newblock Journal of Mathematical Physics
\newblock {\bf  50} (2009), 113514.

 



\bibitem{hartwig2006deformations}
J. Hartwig, D. Larsson, and S. Silvestrov,
\newblock {\it Deformations of {L}ie algebras using $\sigma$-derivations},
\newblock Journal of Algebra
\newblock {\bf  295} (2006), 314--361.

\bibitem{hegazi1999classification}
A. Hegazi,    
\newblock {\it Classification of nilpotent {L}ie superalgebras of dimension five. I},
\newblock {International Journal of Theoretical Physics},
\newblock {\bf 38}, 6 (1999), 1735--1739.

\bibitem{larsson2005quasi}
D. Larsson and S. Silvestrov,
\newblock {\it Quasi-{H}om-{L}ie algebras, central extensions and
  2-cocycle-like identities},
\newblock Journal of Algebra
\newblock {\bf  288} (2005), 321--344.


\bibitem{makhlouf2007notes}
A. Makhlouf and S. Silvestrov,
\newblock {\it Notes on formal deformations of {H}om-associative and
  {H}om-{L}ie algebras},
\newblock
\newblock Forum Math, vol. \textbf{22} (4) (2010) 715--759.
.


\bibitem{makhlouf2009hom}
A. Makhlouf,
\newblock {\it Hom-alternative algebras and {H}om-{J}ordan algebras},
\newblock Int. Electron. J. Algebra
\newblock {\bf  8} (2010), 177--190.

\bibitem{makhlouf2006hom}
A. Makhlouf and S. Silvestrov, 
\newblock {\it On {H}om-algebra structures},
\newblock {J. Gen. Lie Theory Appl. 2},
\newblock {\bf 2} (2008), 51--64.



\bibitem{medina1985algebres}
A. Medina and P. Revoy,
\newblock {\it Alg\`ebres de {L}ie et produit scalaire invariant},
\newblock Ann. Sci. Ecole Norm. Sup.(4)
\newblock {\bf  18} (1985), 553--561.


\bibitem{qingcheng2008derivations}
Z. Qingcheng and Z. Yongzheng,
\newblock {\it Derivations and extensions of {L}ie color algebra},
\newblock Acta Mathematica Scientia
\newblock {\bf  28} (2008), 933--948.


 \bibitem{rittenberg1978generalized}
V. Rittenberg and D. Wyler,
\newblock {\it Generalized superalgebras},
\newblock Nuclear Physics B
\newblock {\bf  139} (1978), 189--202.

\bibitem{rittenberg1978sequences}
---------,
\newblock {\it Sequences of graded $\mathbb{Z}\otimes \mathbb{Z}$ {L}ie
  algebras and superalgebras},
\newblock Journal of Mathematical Physics
\newblock {\bf  19} (1978), 2193.

\bibitem{scheunert1979theory}
M. Scheunert,
\newblock {\it The theory of Lie superalgebras: an introduction},
\newblock Springer-Verlag,
\newblock 1979.

\bibitem{scheunert1998cohomology}
M. Scheunert and RB. Zhang,
\newblock {\it Cohomology of {L}ie superalgebras and their generalizations},
\newblock {Journal of Mathematical Physics},
\newblock {\bf 39} (1998), 5024. 
 

\bibitem{scheunert1979generalized}
M. Scheunert,
\newblock {\it Generalized {L}ie algebras},
\newblock Group Theoretical Methods in Physics,
\newblock (1979), 450--450,


\bibitem{scheunert1983graded}
M. Scheunert,  
\newblock {\it Graded tensor calculus},
\newblock Journal of mathematical physics,
\newblock {\bf 24} (1983), 2658.


\bibitem{sheng2010representations}
Y. Sheng, 
\newblock {\it Representations of {H}om-{L}ie algebras},
\newblock {Algebras and Representation Theory},
\newblock (2010), 1--18.



\bibitem{wang2010non}
S. Wang, L. Zhu, and Y. Su,
\newblock {\it Non-degenerate invariant bilinear forms on {L}ie color
  algebras},
\newblock  Algebra Colloquium, Vol.~17,
\newblock 2010, pp.  365--374.


\bibitem{Yau:EnvLieAlg} D. Yau,
\newblock \emph{Enveloping algebra of Hom-Lie algebras}, 
\newblock 
\newblock J. Gen. Lie Theory Appl. \textbf{2} (2008) 95--108.


\bibitem{Yau:homology} D. Yau,
\newblock \emph{ Hom-algebras and homology,}  
\newblock J. Lie Theory \textbf{19} (2009) 409--421.

\bibitem{Yau2012}
D. Yau,
\newblock {\it Hom-maltsev, hom-alternative and hom-jordan algebras},
\newblock International Electronic Journal of Algebra
\newblock {\bf  11} (2012), 177--217.


\bibitem{yuan2010hom}
L. Yuan,
\newblock {\it Hom-{L}ie color algebra structures},
\newblock
\newblock (2010), e-Print: arXiv:1005.1127. 



\bibitem{ZhangBai} R. Zhang, D. Hou and C. Bai, 
\newblock \emph{A Hom-version of the affinizations of Balinskii-Novikov and
Novikov superalgebras,}
\newblock  J. Math. Phys. 52 (2011), 023505.



 
\end{thebibliography}
\end{document}